\newtheorem{theorem}{Theorem}
\newtheorem{lemma}[theorem]{Lemma}
\newtheorem{corollary}[theorem]{Corollary} 
\newtheorem{example}[theorem]{Example}
\newtheorem{problem}[theorem]{Problem}
\newtheorem{definition}[theorem]{Definition}
\newtheorem{conjecture}[theorem]{Conjecture}
\newtheorem{preproof}{{\bf Proof}}
\newenvironment{proof}[1]{\begin{preproof}{\rm
			#1}\hfill{$\blacksquare$}}{\end{preproof}}
\newtheorem{presproof}{{\bf Sketch of Proof.\ }}
\newtheorem{prepro}{{\bf Proposition}}
\begin{document}

\title{Derangement Representation of Graphs}
{\small
	\author{Somayeh Ashofteh, Moharram N. Iradmusa\\
		{\small Department of Mathematical Sciences, Shahid Beheshti University,}\\
		{\small G.C., P.O. Box 19839-63113, Tehran, Iran.}}

\maketitle
\begin{abstract}
A derangement $k$-representation of a graph $G$  is a map $\pi$ of $V(G)$ to the symmetric group $S_k$, such that for any two vertices $v$ and $u$ of $V(G)$, $v $ and $u$ are adjacent if and only if $\pi(v)(i) \neq \pi(u)(i)$ for each $i \in \{1,2,3,\ldots,k\}$. The derangement representation number of $G$ denoted by  $drn(G)$, is the minimum of $k$ such that $G$ has a derangement $k$-representation. In this paper, we prove that any graph has a derangement $k$-representation. Also, we obtain some lower and upper bounds for $drn(G)$, in terms of the basic parameters of $G$. Finally, we determine the exact value or give the better bounds of the derangement representation number of some classes of graphs.
\end{abstract}

\section{Introduction}
All graphs we consider in this paper are simple, finite, and undirected. For a graph $G$, we denote its vertex set and edge set by $V(G)$ and $E(G)$, respectively. Also we use the notations $p(G)$, $q(G)$, $\omega(G)$ and $G^c$ for the order, the size, the maximum size of cliques and the complement graph of $G$, respectively. The path and the cycle of order $n$ are denoted by $P_n$ and $C_n$, respectively. From now on,  we use the notation $[n]$ and $\mathbb{N}_{\geq n}$ instead of $\{1,\ldots,n\}$ and $\{m\in\mathbb{N}|\ m\geq n\}$, respectively. We mention some definitions that are referred to throughout this paper and for other necessary definitions and notation we refer the reader to a standard text-book \cite{bondy}.\\
There are many geometric and algebraic representations of graphs, such as intersection graphs, interval graphs \cite{trotter1979double}, orthogonal latin square graphs \cite{lindner1979orthogonal}, and Cayley graphs. Formally, an intersection graph $G$ is a graph formed from a family of finite sets $\mathcal{F}=\{S_1, S_2,\ldots, S_n\}$, by creating one vertex $v_i$ for each set $S_i$, and connecting two vertices $v_i$ and $v_j$ by an edge whenever the corresponding two sets have a nonempty intersection, that is, $E(G)=\{\{v_i,v_j\}\mid i\neq j,\ S_{i}\cap S_{j}\neq\emptyset \}$.  The intersection number of a graph $G$ is the minimum total number of elements in a representation of $G$ as an intersection graph of finite sets. 
In 1966, Erdos et al. proved that the intersection number of $G$ is at most $\frac{n^2}{4}$, where $n$ is the order of $G$ \cite{erdos1966}.

Replacing the finite sets by open intervals, the interval number of a graph $G$, denoted by $i(G)$, is the minimum $t$ such that $G$ is the intersection graph of sets consisting of at most $t$ intervals on the real line \cite{trotter1979double}. Such a description of $G$ is called a $t$-interval representation of $G$.

An orthogonal latin square graph (OLSG) is one in which the vertices are latin squares of the same order and on the same symbols, and two vertices are adjacent if and only if the corresponding latin squares are orthogonal \cite{lindner1979orthogonal}. Erdos and Evans proved that any finite graph can be realized as an orthogonal latin square graph \cite{erdos1989representations}. 

A Cayley graph is a graph on a group $G$ with connection set $S$ satisfying $1 \notin S$ and $S=S^{-1}$,  denoted by $Cay(G,S)$, such that the vertices are the elements of $G$ and there is an edge joining $g$ and $h$ if and only if $h = sg$ for some $s \in S$ \cite{beineke2004topics}. This concept was introduced by Arthur Cayley in $1878$ \cite{cayley1854vii}. We know that any Cayley graph is a vertex-transitive graph and so the family of Cayley graphs is a proper subfamily of the family of all graphs. But Babai and Sos in a probabilistic approach showed that if $G$ is a finite graph, then every sufficiently large group has a Cayley graph containing an induced subgraph isomorphic to $G$, precisely if $X$ is a finite graph of order $n$ and $G$ is a group of order at least $c_1 n^3$, then $X$ is isomorphic to an induced subgraph of $Cay(G,S)$ for some $S\subseteq G$ \cite{babai1985sidon}. Furthermore, in group theory, the Cayley theorem states that every group $G$ is isomorphic to a subgroup of a symmetric group \cite{cayley1854vii}. Therefore the study of Cayley graphs on symmetric groups has great importance.

The main objective of this work is to provide an algebraic approach for finding a Cayley graph containing an induced subgraph that is isomorphic to any given graph $G$. To show that we first, introduce a new representation of graphs using derangements. A permutation on a finite set $X$ is a bijection on $X$ and the set of all permutations on  $X$ is denoted by $S_X$. When $X=[n]$, $S_X$ is usually denoted by $S_n$. We use the notation $(\pi_1,\ldots,\pi_n)$ for the permutation $\pi\in S_n$ where $\pi(i)=\pi_i$ for each $i\in [n]$. A derangement $\sigma\in S_n$ is a permutation that has no fixed points, which means $\sigma_i\neq i$ for all $i\in [n]$. The set of all derangements of $S_k$ is denoted by $D_k$.\\
\begin{definition}\label{def1}
Let $G$ be a graph and $k\in\mathbb{N}$. We say $G$ is a {\it derangement $k$-representable} if there exists an injective map $\pi:V(G)\rightarrow S_k$, such that for any two vertices $v$ and $u$ of $G$, $v$ and $u$ are adjacent if and only if $\pi(v)(i) \neq \pi(u)(i)$ for all $i \in [k]$. In other words,
$\pi(v)^{-1}\circ \pi(u) \in D_k$. In this case, $\pi$ is called a derangement $k$-representation of $G$. The derangement representation number of $G$, denoted by $drn(G)$, is the minimum of $k$ such that $G$ has a derangement $k$-representation.\\
\end{definition}
\begin{example}\label{example1}
Figure \ref{figP3} shows a derangement 4-representation of $P_3$ ($=K_3-K_2$). In addition, we prove that $P_3$ has no derangement 3-representation. Suppose that $\pi: V(P_3)\rightarrow S_3$ is a derangement 3-representation of $P_3$. So $\pi(v_1)^{-1}\circ \pi(v_2)\in D_3$ and $\pi(v_2)^{-1}\circ \pi(v_3)\in D_3$. In addition $D_3=\{\sigma,\sigma^{-1}\}$, where $\sigma=(2,3,1)$. Therefore, $\{\pi(v_1)^{-1}\circ\pi(v_2), \pi(v_2)^{-1}\circ \pi(v_3)\}=\{\sigma,\sigma^{-1}\}$. So we have $\pi(v_1)^{-1}\circ \pi(v_2)\circ \pi(v_2)^{-1}\circ \pi(v_3)=\pi(v_1)^{-1}\circ \pi(v_3)=1$ which concludes $\pi(v_1)=\pi(v_3)$, a contradiction. Then $drn(P_3)=4$.
\end{example}
\begin{figure}[h]
\label{figP3}
 	\begin{center}
 	\begin{tikzpicture}[scale=0.7]
 	\tikzset{vertex/.style = {shape=circle,draw, line width=1pt,opacity=1.0}}
  \node[vertex] (x) at (-3,0) {};
  \node[vertex] (y) at (0,0) {};
  \node[vertex] (z) at (3,0) {};
  \foreach \from/\to in {x/y,y/z}
  \draw[line width=1pt] (\from) -- (\to);
  \node  at (-3,-0.7) {\small{$(1,2,3,4)$}};
  \node  at (0,-0.7) {\small{$(3,4,1,2)$}};
  \node  at (3,-0.7) {\small{$(1,2,4,3)$}};
  \node  at (-3,0.7) {\small{$v_1$}};
  \node  at (0,0.7) {\small{$v_2$}};
  \node  at (3,0.7) {\small{$v_3$}};
		\end{tikzpicture}
 		\caption{A derangement 4-representation of $P_3$}
 		\label{figP3}
 		\end{center}
 \end{figure}
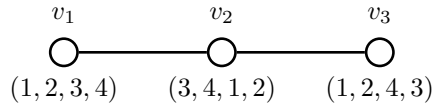
According to the Definition \ref{def1}, we can prove the following theorem which shows the relationship between Cayley graphs associated to the symmetric groups and derangement representation of graphs.\\
\begin{theorem}\label{thm2}
Let $G$ be a graph. Then $G$ is derangement $l$-representable if and only if $G$ is isomorphic to an induced subgraph of $\Gamma=Cay(S_l,D_l)$.
\end{theorem}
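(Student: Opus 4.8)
The plan is to observe that the map exhibiting a derangement representation of $G$ and the map exhibiting an embedding of $G$ as an induced subgraph of $\Gamma=Cay(S_l,D_l)$ are literally the same object; the only point needing care is the passage between the ``left'' multiplication $\pi(v)^{-1}\circ\pi(u)$ used in Definition \ref{def1} and the ``right'' multiplication $h=sg$ used in the definition of a Cayley graph.

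First I would record the elementary structural facts about $D_l$. A permutation is a derangement exactly when its cycle type contains no $1$-cycle, and cycle type is a conjugacy invariant, so $D_l$ is a union of conjugacy classes of $S_l$; in particular $\tau\, D_l\, \tau^{-1}=D_l$ for every $\tau\in S_l$. Moreover $\sigma(i)\neq i$ for all $i$ if and only if $\sigma^{-1}(i)\neq i$ for all $i$, hence $D_l^{-1}=D_l$, and $1\notin D_l$ since the identity fixes every point. Therefore $\Gamma=Cay(S_l,D_l)$ is a well-defined simple (loopless, undirected) Cayley graph, in which $g$ is adjacent to $h$ if and only if $hg^{-1}\in D_l$. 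Combining conjugation-invariance with closure under inversion gives, for all $g,h\in S_l$, the equivalence $g^{-1}h\in D_l \iff hg^{-1}=g(g^{-1}h)g^{-1}\in D_l$, so the adjacency condition of Definition \ref{def1} and that of $\Gamma$ coincide.

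For the forward implication, suppose $\pi:V(G)\to S_l$ is a derangement $l$-representation of $G$. Since $\pi$ is injective it is a bijection from $V(G)$ onto $W:=\pi(V(G))\subseteq V(\Gamma)$, and for $u,v\in V(G)$ I would chase: $u$ and $v$ are adjacent in $G$ $\iff \pi(u)^{-1}\circ\pi(v)\in D_l \iff \pi(v)\circ\pi(u)^{-1}\in D_l \iff \pi(u)$ and $\pi(v)$ are adjacent in $\Gamma$. Hence $\pi$ is an isomorphism from $G$ onto the subgraph of $\Gamma$ induced by $W$. Conversely, if $\phi:V(G)\to W$ is an isomorphism from $G$ onto the subgraph of $\Gamma$ induced by some $W\subseteq S_l$, then taking $\pi:=\phi$ viewed as a map into $S_l$ gives an injection, and reversing the same chain of equivalences — using that $W$ induces $\Gamma$, so adjacency of $\phi(u),\phi(v)$ in the induced subgraph is the same as in $\Gamma$ — shows $u\sim_G v \iff \pi(u)^{-1}\circ\pi(v)\in D_l$, i.e. $\pi$ is a derangement $l$-representation of $G$.

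I do not expect a serious obstacle: the statement is essentially a translation between two languages. The one place to be attentive is exactly the left/right bookkeeping, which is dispatched by the conjugation-invariance of $D_l$, and one should also not skip the routine check that $\Gamma$ is a genuine simple graph ($1\notin D_l$ and $D_l=D_l^{-1}$) before invoking it.
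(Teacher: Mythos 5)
Your proposal is correct and follows essentially the same route as the paper: the representation map $\pi$ is itself the embedding of $G$ as an induced subgraph of $Cay(S_l,D_l)$, and both directions are a direct translation. The extra care you take --- verifying $1\notin D_l$, $D_l^{-1}=D_l$, and reconciling $g^{-1}h$ with $hg^{-1}$ via the conjugation-invariance of $D_l$ --- is a point the paper's proof silently glosses over, and it is handled correctly.
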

As said before, the main result of \cite{babai1985sidon} and Cayley theorem in group theory confirm that any graph has a deranement $l$-representation for some $l\in\mathbb{N}$. In the next theorems we will prove this fact by finding some general upper bounds for derangement representation number of graphs.\\

\begin{theorem}\label{thm3}
(1) $drn(K_n)=n$ for any $n\in\mathbb{N}$.\\
(2) $drn(K_n-K_2)=n$ for any positive integer $n\geq4$.\\
(3) Let $G$ be a graph of order $n$ and $q(G^c)\geq2$. Then $drn(G) \leq (n-1)q(G^c)$.\\
\end{theorem}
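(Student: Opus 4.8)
The unifying tool is the dictionary between ``discordant'' permutations and Latin rectangles: permutations $\tau_1,\dots,\tau_t\in S_k$ are pairwise discordant (meaning $\tau_a^{-1}\tau_b\in D_k$, i.e.\ they agree in no coordinate) if and only if the array with rows $\tau_1,\dots,\tau_t$ is a $t\times k$ Latin rectangle; in particular $t\le k$, and conversely the rows of any Latin square of order $k$ furnish $k$ pairwise discordant permutations. Part (1) follows at once: a derangement $k$-representation of $K_n$ gives $n$ pairwise discordant permutations of $[k]$, hence an $n\times k$ Latin rectangle, so $k\ge n$; and sending the vertices of $K_n$ to the rows of a Latin square of order $n$ is a derangement $n$-representation, since all pairs in $K_n$ are adjacent, distinct rows are discordant, and the rows are distinct so the map is injective.

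For part (2), write the missing edge as $\{v_1,v_2\}$. Lower bound: if $\pi$ were a derangement $(n-1)$-representation, then the images $\pi(v_3),\dots,\pi(v_n)$ of the vertices inducing $K_{n-2}$ would form an $(n-2)\times(n-1)$ Latin rectangle $R$; each column of $R$ omits exactly one symbol, the ``omitted symbol'' map is a permutation, so $R$ has a \emph{unique} extending row — which forces $\pi(v_1)=\pi(v_2)$, contradicting injectivity; hence $drn(K_n-K_2)\ge n$. Upper bound: I want an $(n-2)\times n$ Latin rectangle $R'$ admitting two distinct permutations $\sigma_1,\sigma_2$ of $[n]$, each discordant with every row of $R'$ but agreeing in at least one coordinate; then $v_3,\dots,v_n\mapsto$ rows of $R'$ and $v_1\mapsto\sigma_1$, $v_2\mapsto\sigma_2$ is the desired representation. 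Form the $2$-regular bipartite graph $H_{R'}$ joining each column to the two symbols it omits; its perfect matchings are exactly the permutations discordant with all rows of $R'$, and a suitable pair $\sigma_1,\sigma_2$ exists iff $H_{R'}$ is disconnected, which happens iff two columns of $R'$ use the same $(n-2)$-element set of symbols. So everything reduces to constructing, for each $n\ge4$, an $(n-2)\times n$ Latin rectangle two of whose columns have the same entry set. I expect this to be the main obstacle: the cyclic rectangle never works (its columns are ``sliding windows''), so one needs a genuinely different rectangle. I would force columns $1$ and $2$ to carry the symbols $\{1,\dots,n-2\}$ in a fixed-point-free way and then fill columns $3,\dots,n$ via K\"{o}nig's edge-colouring theorem applied to the bipartite ``which symbol may still go in which row'' graph (of maximum degree $n-2$): its $n-2$ colour classes, each meeting every row, become the remaining $n-2$ columns. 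Combined with the lower bound this yields $drn(K_n-K_2)=n$ for $n\ge4$.

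For part (3), put $m=q(G^c)\ge2$ and enumerate the non-edges of $G$ as $e_1=\{a_1,b_1\},\dots,e_m=\{a_m,b_m\}$; I will build a derangement $(n-1)m$-representation. Fix a Latin square $L$ of order $n-1$ with rows $\rho_1,\dots,\rho_{n-1}$, view $[(n-1)m]$ as $m$ disjoint copies of $[n-1]$, and let $\pi(v)$ act on the $s$-th copy as $\rho_{\psi_s(v)}$, where $\psi_s\colon V(G)\to[n-1]$ is any map that restricts to a bijection on $V(G)\setminus\{b_s\}$ and satisfies $\psi_s(b_s)=\psi_s(a_s)$ (possible since $|V(G)|=n$). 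Since distinct rows of $L$ are discordant and equal rows are not, $\pi(u)$ and $\pi(v)$ agree in no coordinate of the $s$-th copy exactly when $\psi_s(u)\ne\psi_s(v)$, and $\psi_s(u)=\psi_s(v)$ for $u\ne v$ happens precisely when $\{u,v\}=e_s$. Hence $\pi(u)(i)\ne\pi(v)(i)$ for all $i$ iff $\{u,v\}$ is not a non-edge of $G$, i.e.\ iff $u\sim v$; and $\pi$ is injective because $\pi(u)=\pi(v)$ with $u\ne v$ would force $\{u,v\}=e_s$ for every $s$, impossible as $m\ge2$. This gives $drn(G)\le(n-1)m=(n-1)q(G^c)$, and it is exactly this use of $m\ge2$ to rescue injectivity that explains the hypothesis $q(G^c)\ge2$.
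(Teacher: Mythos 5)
Your proposal is correct in all three parts, and in parts (1) and (3) it is essentially the paper's own argument: the paper also gets the lower bound in (1) from the distinctness of the first column and the upper bound from a Latin square of order $n$, and its proof of (3) concatenates, for each non-edge $\{v_{i_t},v_{j_t}\}$, an $n\times(n-1)$ block $L^{(t)}(i_t,j_t)$ obtained from a Latin square of order $n-1$ by duplicating one row -- exactly your ``one Latin square composed with an index map $\psi_s$ that identifies $a_s$ with $b_s$,'' with injectivity likewise rescued by $m\ge 2$. Your lower bound in (2) is also the paper's (an $(n-2)\times(n-1)$ Latin rectangle has a unique completing row, forcing $\pi(v_1)=\pi(v_2)$). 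Where you genuinely diverge is the upper bound in (2): the paper simply takes an explicit Latin square of order $n$ whose first two rows are $[1\ 2\ 3\ 4\ \cdots\ n]$ and $[2\ 1\ 4\ 5\ \cdots\ n\ 3]$ and overwrites the second row's first two entries by $1,2$, then checks directly that the modified row is still discordant with rows $3,\dots,n$; you instead build an $(n-2)\times n$ Latin rectangle with two columns sharing an entry set and read the two extra rows off the $2$-regular ``omitted symbols'' bipartite graph, using K\"onig's edge-colouring theorem (plus an edge count to see every colour class saturates the rows) to complete the rectangle. Both are valid; the paper's version is shorter and fully explicit, while yours isolates a reusable criterion (non-adjacent extensions of a rectangle exist iff the omitted-symbols graph has at least two cycles) that in fact also underlies your lower-bound argument. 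One small slip that does not affect the proof: your claimed equivalence between disconnectedness of $H_{R'}$ and two columns having equal entry sets is only an implication in the direction you use -- a $2$-regular bipartite graph can split into several cycles none of which is a $4$-cycle.
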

A {\it clique decomposition} of a graph $G$ is a collection of non-trivial cliques which partition the edge set of $G$.
\begin{theorem}\label{thm4}
Let $G$ be a graph of order $n$, $D=\{C_1,C_2,\ldots,C_s\}$ be a clique decomposition of $G^c$ and $s\geq2$. Then 
$$drn(G) \leq  s(n+1)- \sum_{i=1}^{s}p(C_i).$$
\end{theorem}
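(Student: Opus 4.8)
The plan is to exhibit an explicit block-diagonal derangement $k$-representation, where $k=\sum_{i=1}^{s}\bigl(n+1-p(C_i)\bigr)=s(n+1)-\sum_{i=1}^{s}p(C_i)$. First I would split the ground set $[k]$ into $s$ consecutive blocks $B_1,\dots,B_s$ with $|B_i|=m_i:=n+1-p(C_i)$ (each $m_i\ge 1$), and restrict attention to permutations $\pi(v)$ that preserve this partition, i.e. $\pi(v)(B_i)=B_i$ for every $i$. For such $\pi$, two vertices $u,v$ satisfy $\pi(u)(t)=\pi(v)(t)$ for some $t\in[k]$ if and only if the restrictions of $\pi(u)$ and $\pi(v)$ to some block $B_i$ agree in a coordinate. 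Hence it suffices to arrange, block by block, that within $B_i$ the vertices of $C_i$ pairwise share a coordinate value, while any two vertices that are not both in $C_i$ are ``deranged on $B_i$''. Because $D$ is a clique decomposition of $E(G^c)$, vertices $u,v$ lie together in some $C_i$ exactly when $uv\in E(C_i)$ for a (necessarily unique) index $i$, which happens exactly when $uv\in E(G^c)$; so the block-wise requirement is equivalent to ``$u$ and $v$ agree in some coordinate $\iff uv\notin E(G)$'', which is precisely what Definition~\ref{def1} asks for.

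To realize this inside block $B_i$, identify $B_i$ with $\mathbb{Z}_{m_i}$ and take the rows $\rho_0,\dots,\rho_{m_i-1}$ of the cyclic Latin square of order $m_i$, namely $\rho_a(b)=a+b \pmod{m_i}$; distinct rows are pairwise deranged, since $\rho_a(b)=\rho_{a'}(b)$ forces $a=a'$. I would assign the single row $\rho_0$ to all $p(C_i)$ vertices of $C_i$ and the remaining $m_i-1=n-p(C_i)$ rows bijectively to the $n-p(C_i)$ vertices outside $C_i$, and let $\pi(v)$ restricted to $B_i$ be the row assigned to $v$. Then in $B_i$ two vertices agree in a coordinate if and only if they received the same row if and only if both lie in $C_i$. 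The counting here — one shared row for the clique vertices together with one private row for each of the other $n-p(C_i)$ vertices, totalling $m_i=n+1-p(C_i)$ rows — is exactly what dictates the block size and hence the shape of the bound. Splicing these block-restrictions together for each $v$ produces a genuine element $\pi(v)\in S_k$.

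Finally I would check the two correctness conditions. For the adjacency condition, the analysis above shows that for distinct $u,v$ they agree in some coordinate iff they share a clique of $D$ iff $uv\in E(G^c)$ iff $uv\notin E(G)$, as required; the one point needing care is that a vertex of $C_i$ may share a \emph{different} clique $C_j$ with some vertex outside $C_i$, but this merely creates an agreement inside $B_j$, which is harmless because an agreement is forbidden only between vertices adjacent in $G$, and vertices adjacent in $G$ share no clique of $D$ at all. For injectivity of $\pi$, if $u\neq v$ belonged to a common clique for two distinct indices $i\neq j$, then the edge $uv$ would lie in both $E(C_i)$ and $E(C_j)$, contradicting that $D$ partitions $E(G^c)$; hence when $s\ge 2$ there is some block in which $u$ and $v$ receive different rows, forcing $\pi(u)\neq\pi(v)$. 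This yields $drn(G)\le k=s(n+1)-\sum_{i=1}^{s}p(C_i)$. The only real obstacle is the bookkeeping of the block sizes together with the ``harmless extra agreement'' observation; everything else is routine verification, and one checks that the construction specializes to Theorem~\ref{thm3}(3) when $D$ is the decomposition of $G^c$ into single edges.
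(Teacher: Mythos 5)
Your proposal is correct and follows essentially the same route as the paper: both build the representation block by block, using for the $i$-th block a Latin square of order $n+1-p(C_i)$ in which all vertices of $C_i$ receive a common row and the remaining $n-p(C_i)$ vertices receive distinct rows, with the adjacency and injectivity checks resting on the same two facts (a pair of vertices lies in at most one clique of the decomposition, and $s\ge 2$). The only cosmetic difference is that you realize the blocks as block-stabilizing permutations built from the cyclic Latin square, whereas the paper concatenates arbitrary Latin squares on pairwise disjoint symbol sets.
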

\subsection{Structure of the paper}
		After this introductory section where we established the background, purpose, and some basic definitions and theorems of the paper, we divide the paper into four sections. In Section \ref{sec2}, we prove Theorems \ref{thm2}, \ref{thm3} and \ref{thm4} and some basic lemmas and theorems and introduce the concept {\it derangement representation matrix} of graphs. In Section \ref{sec3}, we determine the exact value of the derangement representation number for some nearly complete graphs and give the better bounds of the parameter for some classes of graphs. In Section \ref{sec4}, we present some computations that performed by SageMath \cite{sagemath} and in the last section, we state some conjectures and open problems.
\section{Proofs of Thorems \ref{thm2}, \ref{thm3} and \ref{thm4}}\label{sec2}
At first, we prove Theorem \ref{thm2}.\\
		\textbf{Proof of Theorem \ref{thm2}}. Let $G$ be a derangement $l$-representable graph and $\pi:V(G)\rightarrow S_l$ be a derangement $l$-representation of $G$. Then for each vertex $v_i$, $\pi(v_i)$ is a vertex of $Cay(S_l,D_l)$ and for every two vertices $v_i$ and $v_j$ of $G$, we have $\pi(v_i)^{-1}\circ \pi(v_j) \in D_l$ if and only if $v_i$ and $v_j$ are adjacent. Therefore $\pi$ is an isomorphism from $G$ to the subgraph of $Cay(S_l,D_l)$ induced by $\pi(V(G))$.\\
		Now suppose that $G$ is isomorphic to an induced subgraph of $Cay(S_l,D_l)$ and $\pi$ is this isomorphism. Easily one can show that $\pi$ is a derangement $l$-representation of $G$.
				\hfill $\square$\\
\begin{lemma}\label{inducedsubgraph}
Let $G$ be a graph and $H$ be an induced subgraph of $G$. Then $drn(G)\geq drn(H)$.
\end{lemma}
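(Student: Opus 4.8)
The plan is to take a derangement $k$-representation of $G$ with $k = drn(G)$ and show that its restriction to $V(H)$ is a valid derangement $k$-representation of $H$, which immediately gives $drn(H) \le k = drn(G)$. First I would let $\pi : V(G) \to S_k$ be a derangement $k$-representation witnessing $k = drn(G)$, and define $\pi_H : V(H) \to S_k$ by $\pi_H(v) = \pi(v)$ for every $v \in V(H)$. Since $\pi$ is injective on $V(G) \supseteq V(H)$, the map $\pi_H$ is injective. It remains to check the adjacency condition.

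The key point is that $H$ being an \emph{induced} subgraph means that for any two vertices $v, u \in V(H)$, $v$ and $u$ are adjacent in $H$ if and only if they are adjacent in $G$. Combining this with the defining property of $\pi$ — namely that $v$ and $u$ are adjacent in $G$ iff $\pi(v)(i) \neq \pi(u)(i)$ for all $i \in [k]$ — we get that $v$ and $u$ are adjacent in $H$ iff $\pi_H(v)(i) \neq \pi_H(u)(i)$ for all $i \in [k]$, i.e. iff $\pi_H(v)^{-1} \circ \pi_H(u) \in D_k$. Hence $\pi_H$ is a derangement $k$-representation of $H$, so $H$ is derangement $k$-representable and $drn(H) \le k$.

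There is essentially no obstacle here; the only thing one must be careful about is precisely the ``induced'' hypothesis, since for a non-induced subgraph the restriction argument would fail (a non-edge of $H$ that is an edge of $G$ would force $\pi_H(v)^{-1}\circ\pi_H(u) \in D_k$ even though $v,u$ are non-adjacent in $H$). Alternatively, one could phrase the whole argument through Theorem \ref{thm2}: $G$ being derangement $drn(G)$-representable means $G$ embeds as an induced subgraph of $Cay(S_{drn(G)}, D_{drn(G)})$; since $H$ is an induced subgraph of $G$, it is also an induced subgraph of that Cayley graph, and applying Theorem \ref{thm2} in the other direction yields $drn(H) \le drn(G)$. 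Either route is short; I would present the direct restriction argument as the primary proof.
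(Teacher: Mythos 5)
Your proposal is correct and is essentially the paper's argument: the paper phrases it via Theorem \ref{thm2} (transitivity of the induced-subgraph relation inside $Cay(S_k,D_k)$), which is exactly the alternative route you mention, and your primary direct-restriction argument is the same proof unwound into the language of the representation map $\pi$. Your explicit remark on where the ``induced'' hypothesis is used is a nice touch that the paper leaves implicit.
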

\begin{proof}
{Suppose that $drn(G)=k$. Hence $G$ is isomorphic to an induced subgraph of $Cay(S_k,D_k)$ such as $\Gamma$. Let $\alpha:V(G)\rightarrow V(\Gamma)$ be an isomorphism. Therefore, the subgraph of $\Gamma$ induced by $\alpha(V(H))$ is isomorphic to $H$. So $H$ is isomorphic to an induced subgraph of $Cay(S_k,D_k)$. Thus $H$ is derangement $k$-representable and so $drn(H)\leq k$.
}\end{proof}
In the next definition, we introduce an equivalent representation of graphs using matrices.
\begin{definition}\label{matrixrep}
Let $G$ be a graph with $V(G)=\{v_1,v_2,\ldots,v_n\}$ and $\pi$ is a derangement $k$-representation of $G$. A derangement $k$-representation matrix of $G$ associated to $\pi$ is defined as follows:
\[L(\pi,G)=
\begin{bmatrix}
   \pi(v_1)(1) & \pi(v_1)(2)  & \cdots & \pi(v_1)(k) \\ 
   \pi(v_2)(1) & \pi(v_2)(2)  & \cdots & \pi(v_2)(k) \\ 
   \vdots & \vdots & \ddots & \vdots \\
   \pi(v_n)(1) & \pi(v_n)(2) & \cdots & \pi(v_n)(k)
\end{bmatrix}
\]
\end{definition}
The next definition is used in the proof of Theorem \ref{thm3}.
\begin{definition}
Let $L=[l_{i,j}]$ be a latin square of order $n-1$ and $1\leq r<s\leq n$. Then\\
$L(r,s)=[l'_{i,j}]_{n\times (n-1)}$, where
\[l'_{i,j}=\begin{cases}
 l_{i,j}&i<s\\ l_{r,j} & i=s\\ l_{(i-1),j}&i>s
\end{cases}.\]
Note that in $L(r,s)$, $l'_{i,j}\neq l'_{i',j}$ for all $j\in[n-1]$ when $\{r,s\}\neq\{i,i'\}$ and $l'_{i,j}= l'_{i',j}$ for all $j\in[n-1]$ when $\{r,s\}=\{i,i'\}$.
\end{definition}
\textbf{Proof of Theorem \ref{thm3}}. (1) Any latin square of order $n$ is a derangement $n$-representation matrix of $K_n$. So $drn(K_n)\leq n$. Now suppose that $\pi:V(K_n)\rightarrow S_k$ is a derangement $k$-representation of $K_n$. If $i\neq j$ then $\pi(v_i)(1)\neq \pi(v_j)(1)$. Therefore,
$|\{\pi(v_i)(1)| i\in[n]\}|=n$. Hence we have $k\geq n$ and so $drn(K_n)=n$.\\
(2) Let $G=K_n-K_2$, $n\geq4$ and  $V(G)=\{v_1,\ldots,v_n\}$. Suppose that $v_{1} $ and $v_2$ are two non-adjacent vertices. Since $G-v_1$ is a clique of $G$, $drn(G)\geq drn(G-v_1)=n-1$. Now we choose a latin square $L$ of order $n$ such that the first two rows of $L$ are $R_1=[1\ 2\ 3\ 4\ \cdots\ n]$ and $R_2=[2\ 1\ 4\ 5\ \cdots \ n\ 3]$. Then we replace $R_2$ with $R_2'=[1\ 2\ 4\ 5\ \cdots \ n\ 3]$. The resulting matix is a derangement $n$-representation matrix of $K_n$ ans so $drn(K_n)\leq n$. We prove that $drn(G)>n-1$. In contrary, suppose that $A=[a_{i,j}]_{n\times(n-1)}$ is a derangement $(n-1)$-representation matrix of $K_n-K_2$ where the corresponding vertex to $R_i$ (the $i$-th row of $A$) is $v_i$ and $v_1\nsim v_2$. The subgraph $K_n-v_1$ and $K_n-v_2$ are two cliques of order $n-1$. So the resulting matrices obtained from $A$ by removing any row of $\{R_1,R_2\}$ are latin squares of order $n-1$. In these latin squares, the last $n-2$ rows are the same rows, implying that $R_1=R_2$, a contradiction. Therefore $drn(K_n-K_2)=n$.\\
(3) Suppose that $N=(n-1)q(G^c)$, $V(G)=\{v_1,\ldots,v_n\}$ and
$E(G^c)=\{\{v_{i_t},v_{j_t}\}| 1\leq t\leq m, i_t<j_t\}$. Also suppose that $L^{(1)},\ldots,L^{(m)}$ are latin squares of order $n-1$  with mutually distinct symbols, such that the union set of all symbols is $[N]$. Now consider the following block matrix:
\[L=\bigg[\begin{array}{c|c|c|c}
   L^{(1)}(i_1,j_1) & L^{(2)}(i_2,j_2)&\cdots & L^{(m)}(i_m,j_m)
   \end{array}\bigg].\]
   We show that $L$ is a derangement $N$-representation matrix of $G$ associated to the map $\alpha:V(G)\rightarrow S_N$, where $\alpha(v_i)$ is $i$-th row of the matrix $L$. Suppose that $v_i$ and $v_{i'}$ are adjacent. Then $\{i,i'\}\notin\{\{i_t,j_t\}| 1\leq t\leq m, i_t<j_t\}$ and so $l'^{(k)}_{i,j}\neq l'^{(k)}_{i',j}$ for all $j\in[n-1]$ and all $k\in [m]$. Therefore $\alpha(v_i)(j)\neq \alpha(v_{i'})(j)$ for all $j\in [N]$. Now suppose that $v_i$ and $v_{i'}$ are not adjacent. Then $\{i,i'\}=\{i_{k_0},j_{k_0}\}$ for some $k_0\in [m]$ and so $l'^{(k_0)}_{i,j}= l'^{(k_0)}_{i',j}$ for all $j\in[n-1]$. Therefore $\alpha(v_i)(j_0)=\alpha(v_{i'})(j_0)$ for some $j_0\in [N]$. In addition, since $m\geq2$, $L$ has $n$ different rows and so $\alpha$ is an injective map. Therefore $drn(G)\leq N$. 
				\hfill $\square$\\
Applying Lemma \ref{inducedsubgraph} and Theorem \ref{thm3}, we conclude the following corollary.
\begin{corollary}
Let $G$ be a graph with clique number $\omega(G)$. Then $drn(G)\geq \omega(G)$.
\end{corollary}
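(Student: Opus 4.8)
The plan is to reduce the statement immediately to the two results already established just above: the monotonicity of $drn$ under induced subgraphs (Lemma \ref{inducedsubgraph}) and the exact value $drn(K_n)=n$ (Theorem \ref{thm3}(1)). The corollary should then follow in one line, so the write-up is really just an application rather than a new argument.

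Concretely, first I would fix a maximum clique of $G$: by definition of the clique number there is a subset $S\subseteq V(G)$ with $|S|=\omega(G)$ such that the subgraph of $G$ induced by $S$ is a complete graph, i.e.\ it is isomorphic to $K_{\omega(G)}$. Since $K_{\omega(G)}$ arises here as an \emph{induced} subgraph of $G$ (a clique is automatically induced, as all possible edges among its vertices are present), Lemma \ref{inducedsubgraph} applies and gives $drn(G)\geq drn(K_{\omega(G)})$.

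Next I would invoke Theorem \ref{thm3}(1), which states $drn(K_n)=n$ for every $n\in\mathbb{N}$; taking $n=\omega(G)$ yields $drn(K_{\omega(G)})=\omega(G)$. Chaining the two inequalities gives $drn(G)\geq drn(K_{\omega(G)})=\omega(G)$, which is exactly the claim. (If one prefers to avoid even citing Lemma \ref{inducedsubgraph}, the same conclusion can be drawn directly from the definition: a derangement $k$-representation $\pi$ of $G$ restricts to a derangement $k$-representation of the induced clique $K_{\omega(G)}$, and then the pigeonhole argument in the proof of Theorem \ref{thm3}(1) — looking at the first coordinates $\pi(v)(1)$, which must be pairwise distinct on the clique — forces $k\geq\omega(G)$.)

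There is no real obstacle here: the statement is a straightforward corollary, and the only thing to be careful about is the (trivial) observation that a clique is always an induced subgraph, so that Lemma \ref{inducedsubgraph} is genuinely applicable and no subtlety about non-edges intervenes.
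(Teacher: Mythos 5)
Your proof is correct and is exactly the argument the paper intends: it states the corollary as an application of Lemma \ref{inducedsubgraph} together with $drn(K_n)=n$ from Theorem \ref{thm3}(1), which is precisely your chain $drn(G)\geq drn(K_{\omega(G)})=\omega(G)$. No further comment is needed.
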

\begin{definition}
Let $L=[l_{i,j}]$ be a latin square of order $n-k+1$, $S=\{s_1,\ldots,s_k\}$ and $1\leq s_1<\cdots<s_k\leq n$. Then $L(S)=[l'_{i,j}]_{n\times (n-k+1)}$, where
\[l'_{i,j}=\begin{cases}
l_{s_1,j} & i\in\{s_2,\ldots,s_k\}\\l_{i,j}&i<s_2 \\ l_{(i-1),j}&s_2<i<s_3\\
 l_{(i-2),j}&s_3<i<s_4\\ \cdots&\cdots \\l_{(i-k+2),j}&s_{k-1}<i<s_{k}\\l_{(i-k+1),j}&s_{k}<i
\end{cases}.\]
Note that in $L(S)$, $l'_{i,j}\neq l'_{i',j}$ for all $j\in[n-1]$ when $\{i,i'\}\nsubseteq S$ and $l'_{i,j}= l'_{i',j}$ for all $j\in[n-1]$ when $\{i,i'\}\subseteq S$.
\end{definition}
\textbf{Proof of Theorem \ref{thm4}}. For any $i \in[s]$ let $N_i=n-p_i+1$ and $L^{(i)}$ be a latin square of order $N_i$ where $p_i=p(C_i)$. Suppose that $A_i=V(C_i)=\{v^{i}_{j_1},\ldots,v^{i}_{j_{p_i}}\}$. Then we have $s$ latin squares $L^{(1)},\ldots,L^{(s)}$, with mutually distinct symbols such that the union set of all symbols is $\{1,2,\ldots,N\}$ where $N=\sum_{i=1}^{s}N_i=s(n+1)-\sum_{i=1}^{s}p_i$. Now consider the following block matrix:
\[L=\bigg[\begin{array}{c|c|c|c}
   L^{(1)}(A_1) & L^{(2)}(A_2)&\cdots & L^{(s)}(A_s)
   \end{array}\bigg],\]
in which $L^{(i)}(A_i)$ is a matrix of order $n\times(n-p_i+1)$. We show that $L$ is a derangement $N$-representation matrix of $G$ associated to the map $\alpha:V(G)\rightarrow S_N$, where $\alpha(v_i)$ is $i$-th row of the matrix $L$. Suppose that $v_i$ and $v_{i'}$ are adjacent. Then $\{i,i'\}\nsubseteq A_t$ for all $t\in[s]$ and so $l'^{(k)}_{i,j}\neq l'^{(k)}_{i',j}$ for all $j\in[n-p_k+1]$ and all $k\in [s]$. Therefore $\alpha(v_i)(j)\neq \alpha(v_{i'})(j)$ for all $j\in [N]$. Now suppose that $v_i$ and $v_{i'}$ are not adjacent. Then $\{i,i'\}\subseteq A_{k_0}$ for some $k_0\in [s]$ and so $l'^{(k_0)}_{i,j}= l'^{(k_0)}_{i',j}$ for all $j\in[n-p_{k_0}+1]$. Therefore $\alpha(v_i)(j_0)=\alpha(v_{i'})(j_0)$ for some $j_0\in [N]$. In addition, since $s\geq2$, $L$ has $n$ different rows and so $\alpha$ is an injective map. Therefore $drn(G)\leq N$.
\hfill $\square$
\section{Improved results for some classes of graphs}\label{sec3}
In this section, we will calculate some better lower and upper bounds or the exact amount of derangement representation numbers for specific families of graphs.\\
To prove the next theorem, we need the definition of {\it intersecting family of permutations}.
\begin{definition}\label{intersectingfamily}
A subset $S$ of $S_n$ is intersecting if for any two permutations $g$ and $h$ in $S$, $g(i)=h(i)$ for some $i\in [n]$ or equivalently $h^{-1}\circ g\notin D_n$.
\end{definition}
In \cite{deza1977}, M. Deza and P. Frankl proved that $|S|\leq (n-1)!$ for any intersecting family $S$ of permutations of symmetric group $S_n$.
\begin{theorem}\label{thm11}
Let $n,k\in\mathbb{N}$ and $(k-1)!<n\leq k!$. Then $drn(\overline{K_n})=k+1$. 
\end{theorem}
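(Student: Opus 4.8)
The claim is that the empty graph $\overline{K_n}$ has $drn(\overline{K_n}) = k+1$ exactly when $(k-1)! < n \le k!$. Since $\overline{K_n}$ has no edges, a derangement $m$-representation of $\overline{K_n}$ is precisely an injective map $\pi : V \to S_m$ such that for \emph{every} pair $u,v$ we have $\pi(u)(i) = \pi(v)(i)$ for some $i$; equivalently, $\pi(V)$ is an \emph{intersecting family} of permutations in $S_m$ of size $n$. So the whole theorem reduces to: the smallest $m$ for which $S_m$ contains an intersecting family of size $n$ is $m = k+1$, given $(k-1)! < n \le k!$.

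The plan is to prove two inequalities. First, $drn(\overline{K_n}) \le k+1$: I will exhibit an intersecting family of size $n$ inside $S_{k+1}$. The natural candidate is a \emph{coset-type} family — e.g. the set of all permutations in $S_{k+1}$ that fix the symbol $k+1$ in position $k+1$, which is a copy of $S_k$ and hence has size $k! \ge n$; any two members agree in coordinate $k+1$, so it is intersecting. Taking any $n$ of these (distinctness gives injectivity of $\pi$) yields the required representation, so $drn(\overline{K_n}) \le k+1$. Second, and this is the substantive direction, $drn(\overline{K_n}) \ge k+1$, i.e. $S_k$ contains no intersecting family of size $n$ when $n > (k-1)!$. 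This is exactly the Deza–Frankl bound quoted in the excerpt: any intersecting family in $S_k$ has size at most $(k-1)!$. Since $n > (k-1)!$, no intersecting family of size $n$ fits in $S_k$, so no derangement $k$-representation of $\overline{K_n}$ exists; combined with Lemma~\ref{inducedsubgraph}-type monotonicity (or just directly, since any derangement $m$-representation with $m<k$ would embed in one with $m=k$ by padding), we get $drn(\overline{K_n}) > k$, hence $\ge k+1$.

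Putting the two bounds together gives $drn(\overline{K_n}) = k+1$. The only point needing a little care is the lower bound when $m < k$: one should note that a derangement $m$-representation can be extended to a derangement $(m{+}1)$-representation by appending to each permutation one more coordinate in a consistent way (e.g. mapping each $\pi(v) \in S_m$ to the permutation in $S_{m+1}$ that agrees with $\pi(v)$ on $[m]$ and fixes $m{+}1$), which preserves both injectivity and the intersecting property; hence $drn(\overline{K_n}) \le m$ would force an intersecting family of size $n$ in $S_k$, contradicting Deza–Frankl once $n > (k-1)!$.

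I expect the main obstacle to be essentially bookkeeping rather than a deep difficulty: verifying that the explicit coset family genuinely has the intersecting property and that its size $k!$ is $\ge n$ (which is the hypothesis $n \le k!$), and confirming that the lower-bound argument correctly uses the hypothesis $n > (k-1)!$ against the Deza–Frankl theorem. In other words, the theorem is a clean translation of the problem into the language of intersecting families of permutations, after which the upper bound is a one-line construction and the lower bound is an immediate application of the cited Deza–Frankl result.
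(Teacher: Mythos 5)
Your proposal is correct and follows essentially the same route as the paper: both directions reduce to intersecting families of permutations, with the upper bound given by the coset of permutations sharing a fixed coordinate value (size $k!\ge n$) and the lower bound given by the Deza--Frankl bound $|S|\le (m-1)!$. Your extra padding step for $m<k$ is harmless but unnecessary --- the paper simply applies Deza--Frankl directly at $t=drn(\overline{K_n})$ to get $n\le (t-1)!$, and $(t-1)!\ge n>(k-1)!$ forces $t\ge k+1$ at once.
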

\begin{proof}{
Suppose that $drn((\overline{K_n})=t$ and $\pi:V(\overline{K_n})\rightarrow S_t$ is a derangement $t$-representation of $\overline{K_n}$. Therefore $\pi(V(\overline{K_n}))$ is an intersecting subset of $S_t$ and so $|\pi(V(\overline{K_n}))|=n\leq (t-1)!$. Thus $t-1\geq k$ and hence $drn((\overline{K_n})=t\geq k+1$.\\
To complete the proof, we give a derangement $(k+1)$-representation matrix of $\overline{K_n}$. Let $A=[a_{i,j}]_{n\times k}$ be a matrix that its rows represent $n$ permutations of $S_k$. Then the following matrix is a derangement $(k+1)$-representation matrix of $\overline{K_n}$:
\[L(\overline{K_n})=\left[\begin{array}{c|c}
\begin{array}{c}
   k+1 \\
   \vdots \\
    k+1  
   \end{array}& A  \end{array}\right].\]}
   \end{proof}
we know that $drn(K_n)=drn(K_n-K_2)=n$. Now we investigate to find the derangement representation number of nearly complete graphs. A graph is {\it nearly complete} if it can be obtained by removing a small number of edges from a complete graph relative to the size of the graph.
\begin{theorem}\label{p3}
(1) $drn(K_n-P_3)=n$ when $n\in\{3,4\}$, and\\ 
(2) $drn(K_n-P_3)= n-1$ when $n\in\mathbb{N}_{\geq5}$.
\end{theorem}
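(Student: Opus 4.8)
Write $V(K_n-P_3)=\{v_1,\dots,v_n\}$, where the removed path is $v_1v_2v_3$; thus the only non-edges of $K_n-P_3$ are $\{v_1,v_2\}$ and $\{v_2,v_3\}$, while $v_1v_3$ is an edge. Since $K_n-P_3-v_2\cong K_{n-1}$ is an induced subgraph, Lemma~\ref{inducedsubgraph} and Theorem~\ref{thm3}(1) already give $drn(K_n-P_3)\ge n-1$ for every $n\ge 3$. The plan is to prove (2) by exhibiting a derangement $(n-1)$-representation matrix, to bump the lower bound up by one in the two small cases, and to read off (1).

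For $n=3$, a derangement $k$-representation is injective, so $3\le k!$, which forces $k\ge 3$; hence $drn(K_3-P_3)\ge 3$. For $n=4$ I would rule out a derangement $3$-representation $\alpha:V\to S_3$. In it, $v_1,v_3,v_4$ induce $K_3$, so the rows $\alpha(v_1),\alpha(v_3),\alpha(v_4)$ pairwise disagree in every column, hence form a $3\times 3$ Latin square; in particular, in each column their entries are exactly $1,2,3$. Because $v_2\sim v_4$ but $v_2\not\sim v_1,v_3$, the row $\alpha(v_2)$ must avoid the $v_4$-entry in every column, so $\alpha(v_2)(j)\in\{\alpha(v_1)(j),\alpha(v_3)(j)\}$ for all $j$, and $\alpha(v_2)$ must agree with $\alpha(v_1)$ in some column and with $\alpha(v_3)$ in some column. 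These two agreement sets partition $[3]$ and both are nonempty, so one has size $\ge 2$; but two permutations of $[3]$ agreeing in two columns agree in the third as well, so $\alpha(v_2)\in\{\alpha(v_1),\alpha(v_3)\}$, contradicting injectivity. Therefore $drn(K_4-P_3)\ge 4$.

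For the upper bound when $n\ge 5$, set $m=n-1\ge 4$ and let $\sigma=(1\,2)(3\,4\,\cdots\,m)\in S_m$, a derangement with exactly two cycles. The $2\times m$ array with rows $\mathrm{id}$ and $\sigma$ is a Latin rectangle and hence extends to a Latin square $L$ of order $m$; write its rows as $\mathrm{id},\sigma,\tau_4,\dots,\tau_n$. Define $\alpha(v_1)=\mathrm{id}$, $\alpha(v_3)=\sigma$, $\alpha(v_i)=\tau_i$ for $4\le i\le n$, and $\alpha(v_2)=\rho$, where $\rho(1)=2$, $\rho(2)=1$ and $\rho(j)=j$ for $3\le j\le m$ (so $\rho$ is the transposition $(1\,2)$ inside $S_m$). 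Then the rows $\mathrm{id},\sigma,\tau_4,\dots,\tau_n$ pairwise disagree in every column (rows of a Latin square), so every pair inside $\{v_1,v_3,v_4,\dots,v_n\}$ is an edge, as required. The row $\rho$ agrees with $\mathrm{id}$ in columns $3,\dots,m$ and with $\sigma$ in columns $1,2$, so $v_2$ is non-adjacent to $v_1$ and to $v_3$. Finally, $\rho(j)\in\{j,\sigma(j)\}=\{\alpha(v_1)(j),\alpha(v_3)(j)\}$, while in column $j$ of $L$ the entries of $\tau_4,\dots,\tau_n$ exhaust $[m]\setminus\{j,\sigma(j)\}$; hence $\rho$ disagrees with every $\tau_i$ in every column, so $v_2$ is adjacent to each of $v_4,\dots,v_n$. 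Since $\rho$ is also distinct from every row of $L$, the map $\alpha$ is injective, and the associated matrix is a derangement $(n-1)$-representation matrix of $K_n-P_3$. Thus $drn(K_n-P_3)=n-1$ for $n\ge 5$.

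It remains to close (1). For $n=3$, the matrix with rows $(1,2,3),(1,3,2),(2,3,1)$ for $v_1,v_2,v_3$ is directly checked to be a derangement $3$-representation, so $drn(K_3-P_3)\le 3$, giving equality. For $n=4$, deleting from $K_5-P_3$ a vertex not lying on the path leaves $K_4-P_3$ as an induced subgraph, so Lemma~\ref{inducedsubgraph} and part~(2) give $drn(K_4-P_3)\le drn(K_5-P_3)=4$, and with the lower bound above, $drn(K_4-P_3)=4$. I expect the construction for $n\ge 5$ to be the main obstacle: one must choose the two distinguished rows of the Latin square so that the associated permutation is a derangement that is not a single cycle, which is possible exactly when $m=n-1\ge 4$ — and the failure of this for $m\in\{2,3\}$ is precisely the source of the exceptional values $n=3,4$. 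The verification that the $v_2$-row then behaves correctly in every column is the other point needing care; everything else is routine bookkeeping.
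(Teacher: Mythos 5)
Your proof is correct. For the main case $n\ge 5$ it is essentially the paper's construction: a Latin square of order $n-1$ whose first two rows differ by a derangement that splits as a transposition on $\{1,2\}$ times a cycle on $\{3,\dots,n-1\}$, plus one appended row that copies one of those two rows on the block $\{1,2\}$ and the other on the block $\{3,\dots,n-1\}$ (the paper appends $[1\ 2\ n{-}1\ 3\ \cdots\ n{-}2]$, you append $[2\ 1\ 3\ \cdots\ n{-}1]$; the two choices are interchangeable), and your observation that this needs $n-1\ge 4$ correctly locates the source of the exceptional cases. Where you genuinely diverge is in part (1): the paper obtains the lower bounds by finding $\overline{K_2}$ inside $K_3-P_3$ and $P_3$ inside $K_4-P_3$ and invoking Lemma \ref{inducedsubgraph} together with Theorem \ref{thm11} and Example \ref{example1}, whereas you give self-contained arguments --- a counting bound $n\le k!$ for $n=3$, and for $n=4$ a direct column analysis showing that the $v_2$-row of a putative $3$-representation would have to coincide with the $v_1$- or $v_3$-row of the $3\times 3$ Latin square formed by the triangle $\{v_1,v_3,v_4\}$. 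Both of your arguments check out, and the $n=4$ one has the merit of not depending on the separate computation of $drn(P_3)$. You also replace the paper's explicit $4\times 4$ matrix for $K_4-P_3$ by deducing the upper bound from the $n=5$ case via Lemma \ref{inducedsubgraph}, which is legitimate since $K_4-P_3$ is an induced subgraph of $K_5-P_3$. I find no gaps.
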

\begin{proof}{
(1) $\overline{K_2}$ is an induced subgraph of $K_3-P_3$. So $drn(K_3-P_3)\geq drn(\overline{K_2})=3$ by Theorem \ref{thm11} and Lemma \ref{inducedsubgraph}. In addition, the following matrix is a derangement $3$-representation matrix of $K_3-P_3$:
\[L(K_3-P_3)=\left[\begin{array}{ccc}
   {\bf1} & 2 & 3 \\
   2 & {\bf3} & 1 \\
   {\bf1} & {\bf3} & 2 \\
   \end{array}\right].\]
   Therefore, $drn(K_3-P_3)=3$.\\
$P_3$ is an induced subgraph of $K_4-P_3$. So $drn(K_4-P_3)\geq drn(P_3)=4$ by Lemma \ref{inducedsubgraph} and Example \ref{example1}. In addition, the following matrix is a derangement $4$-representation matrix of $K_4-P_3$:
\[L(K_3-P_3)=\left[\begin{array}{cccc}
   {\bf1} & {\bf2} & 3 & 4\\
   4 & 1 & 2 & {\bf3}\\
   3 & 4 & 1 & 2 \\
   {\bf1} & {\bf2} & 4 & {\bf 3} \\
   \end{array}\right].\]
   Therefore, $drn(K_4-P_3)=4$.\\
(2) $K_{n-1}$ is an induced subgraph of $K_n-P_3$. So $drn(K_n-P_3)\geq drn(K_{n-1})=n-1$ by Lemma \ref{inducedsubgraph}. Suppose that $A$ is a latin square of order $n-1$ such that the first two rows of $A$ are
$R_1=\left[\begin{array}{ccccccc} 1 & 2 & 3 & 4 & 5 & \cdots & n-1\\ \end{array}\right]$ and $R_2=\left[\begin{array}{ccccccc} 2 & 1 & n-1 & 3 & 4 & \cdots & n-2\\ \end{array}\right]$. Now the following matrix is a derangement $(n-1)$-representation matrix of $K_n-P_3$:
\[L(K_n-P_3)=\left[\begin{array}{c}
A \\
\hline
\begin{array}{ccccccc} 1 & 2 & n-1 & 3 & 4 & \cdots & n-2\\ \end{array} 
\end{array}\right].\]
Therefore, $drn(K_n-P_3)=n-1$.
}\end{proof}
\begin{theorem}
(1) $drn(K_n-2K_2)=n$ when $n\in\{4,5,6\}$, and\\ 
(2) $drn(K_n-2K_2)= n-1$ when $n\in\mathbb{N}_{\geq7}$.
\end{theorem}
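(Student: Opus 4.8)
The plan is to separate the lower and upper bounds and, in each, to peel off the small cases $n\in\{4,5,6\}$ from the generic range $n\ge 7$; throughout write the two deleted edges as $v_1v_2$ and $v_3v_4$. For the lower bound when $n\ge 5$, deleting $v_3$ from $K_n-2K_2$ leaves a graph with exactly one missing edge, so $K_{n-1}-K_2$ is an induced subgraph and Lemma \ref{inducedsubgraph} together with Theorem \ref{thm3}(2) gives $drn(K_n-2K_2)\ge drn(K_{n-1}-K_2)=n-1$; for $n=4$, $K_4-2K_2=C_4$ contains an induced $P_3$, so $drn\ge drn(P_3)=4$ by Example \ref{example1}. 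This settles the lower half of part (2) (once the matching upper bound is proved) and of part (1) for $n=4$, and the small upper bounds follow by exhibiting explicit $n\times n$ derangement representation matrices in the style of Theorem \ref{p3}. The two genuinely harder inequalities are $drn(K_5-2K_2)\ge 5$ and $drn(K_6-2K_2)\ge 6$, which have no induced-subgraph shortcut: here I would assume a derangement $(n-1)$-representation, normalize it by composing every image on the left with the inverse of the image of one universal vertex (permissible, since this preserves derangement representations) so that that vertex maps to the identity of $S_{n-1}$, note that the remaining $n-1$ images then lie in $D_{n-1}$ and realize the pattern of $K_n-2K_2$ under the clash relation $\sigma\sim\tau\iff\sigma^{-1}\tau\in D_{n-1}$, and derive a contradiction—for $n=5$ by a short case split on whether the image of $v_1$ is a double transposition or a $4$-cycle (using $|D_4|=9$), and for $n=6$ by the finite check consistent with the computations of Section \ref{sec4}.

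The core of the proof is the upper bound $drn(K_n-2K_2)\le n-1$ for $n\ge 7$. Put $m=n-1\ge 6$ and relabel the vertices as $w_1,\dots,w_{m-3}$ (the universal ones) together with $a,a',b,b'$, the only non-edges being $aa'$ and $bb'$. First choose a latin square $L$ of order $m$ with rows $R_1,\dots,R_m$ such that $R_m^{-1}R_{m-2}$ contains a transposition $(c\,c')$ and $R_m^{-1}R_{m-1}$ contains a transposition $(d\,d')$ with $\{c,c'\}\cap\{d,d'\}=\varnothing$; such an $L$ exists for every $m\ge 6$—pick clashing derangements $\tau_1,\tau_2\in D_m$ carrying those transpositions (for $m=6$ one may take $\tau_1=(2,1,5,6,4,3)$ and $\tau_2=(6,5,4,3,2,1)$), set $R_m=\mathrm{id}$, $R_{m-2}=\tau_1$, $R_{m-1}=\tau_2$, and extend the resulting $3\times m$ latin rectangle to a latin square. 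Now form the $n\times m$ matrix with rows $w_i\mapsto R_i$ for $1\le i\le m-3$, $a\mapsto R_{m-2}$, $b\mapsto R_{m-1}$, $a'\mapsto R_{m-2}\circ(c\,c')$ and $b'\mapsto R_{m-1}\circ(d\,d')$; the transposition condition is exactly what forces $R_{m-2}\circ(c\,c')$ to be a permutation agreeing with the unused row $R_m$ precisely in columns $c,c'$ (and symmetrically for $b'$). I would then verify that this is a derangement $m$-representation matrix of $K_n-2K_2$ in the sense of Definition \ref{matrixrep}: off the at most two modified columns every pair of these rows compares two distinct rows of $L$ and hence clashes; in a modified column the inserted entry is some value $R_m(\cdot)$, which differs from $R_i(\cdot)$ for every $i<m$; the disjointness of $\{c,c'\}$ and $\{d,d'\}$ is precisely what makes $a'$ and $b'$ clash; $a$ agrees with $a'$ (resp. $b$ with $b'$) only in the $m-2\ge 1$ unmodified columns, so $aa'$ and $bb'$ are the only non-edges; and distinctness of the $n$ rows follows from the clash relations just established.

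The step I expect to be the main obstacle is this last, bookkeeping-heavy verification—making sure that swapping two entries of $R_{m-2}$ (via the unused row $R_m$) neither breaks the permutation property nor creates an unintended agreement with any $w_i$ or with $b,b'$, and the symmetric statement for $b'$—together with the two exceptional lower bounds $drn(K_5-2K_2)\ge 5$ and $drn(K_6-2K_2)\ge 6$, which, for want of an induced-subgraph argument, come down to a (small) exhaustive check.
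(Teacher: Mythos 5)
Your overall architecture matches the paper's: the lower bounds via the induced subgraphs $P_3$ and $K_{n-1}-K_2$ (Lemma \ref{inducedsubgraph} together with Example \ref{example1} and Theorem \ref{thm3}) are exactly what the paper does, and your upper-bound construction for $n\ge 7$ is correct but genuinely different in mechanism. The paper prescribes the first three rows of a latin square of order $n-1$ as block-wise cyclic shifts, replaces the second row so that it agrees with the first on an initial block (killing the non-edge $v_1v_2$), and appends a hybrid $n$-th row whose blocks are borrowed from the original second and third rows (killing the non-edge $v_3v_n$); you instead keep $m-1$ rows of a latin square with a special property and adjoin $R_{m-2}\circ(c\,c')$ and $R_{m-1}\circ(d\,d')$, using the unused row $R_m$ to guarantee that the swapped entries still clash with every other row. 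Your verification of the clash pattern, including the $a'$--$b'$ pair via disjointness of $\{c,c'\}$ and $\{d,d'\}$, is sound. What you have not established is that a latin square of order $m$ with disjoint transpositions inside $R_m^{-1}R_{m-2}$ and $R_m^{-1}R_{m-1}$ exists for \emph{every} $m\ge 6$: you verify only $m=6$, and the required pair $\tau_1,\tau_2$ must in addition have $\tau_1^{-1}\tau_2$ a derangement before Hall's theorem applies, a condition that fails for several natural first choices; this is fixable but is a genuine missing step.

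The more serious gap is the pair of exceptional lower bounds $drn(K_5-2K_2)\ge 5$ and $drn(K_6-2K_2)\ge 6$, which you correctly identify as admitting no induced-subgraph shortcut but then leave essentially unproved: for $n=5$ you name a case split without carrying it out, and for $n=6$ you defer to ``the finite check consistent with the computations of Section \ref{sec4}'', which is an appeal to machine search rather than an argument. The paper closes both cases by hand with a short pigeonhole argument you could adopt: in a hypothetical $n\times(n-1)$ representation matrix each column has $n$ entries drawn from $[n-1]$, so some entry repeats, and a repeat may occur only within rows $\{1,2\}$ or within rows $\{3,4\}$; since two permutation rows of length $n-1$ agreeing in $n-2$ positions must be identical, each non-edge contributes between $1$ and $n-3$ agreement columns, and the resulting case analysis (one configuration for $n=5$, three for $n=6$) forces a repeated symbol in one of the remaining rows, a contradiction. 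Your normalization trick (left-composing so that a universal vertex maps to the identity) is legitimate but does not by itself reduce the $n=6$ case to anything shorter than such an analysis.
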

\begin{proof}{
(1) $P_3$ is an induced subgraph of $K_4-2K_2$. So $drn(K_3-2K_2)\geq drn(P_3)=4$ by Lemma \ref{inducedsubgraph}. In addition, the following matrix is a derangement $4$-representation matrix of $K_4-2K_2$:
\[L(K_4-2K_2)=\left[\begin{array}{cccc}
   {\bf1} & {\bf2} & 3 & 4 \\
   {\bf1} & {\bf2} & 4 & 3 \\
   3 & 4 & {\bf1} & {\bf2} \\
   4 & 3 & {\bf1} & {\bf2} \\
   \end{array}\right].\]
   Therefore, $drn(K_4-2K_2)=4$.\\
The following matrix is a derangement $5$-representation matrix of $K_5-2K_2$:
\[L(K_4-2K_2)=\left[\begin{array}{ccccc}
   5 & {\bf2} & 3 & 4 & 1 \\
   1 & {\bf2} & 4 & 5 & 3 \\
   {\bf3} & 5 & 1 & 2 & 4 \\
   {\bf3} & 4 & 5 & 1 & 2 \\
   4 & 1 & 2 & 3 & 5 \\
   \end{array}\right].\]
   Therefore, $drn(K_5-2K_2)\leq5$. We show that $drn(K_5-2K_2)>4$. In contrary, suppose that $A=[a_{i,j}]_{5\times4}$ is a derangement $4$-representation matrix of $K_5-2K_2$ where the corresponding vertex to the $i$-th row is $v_i$, $v_1\nsim v_2$ and $v_3\nsim v_4$. Therefore  we have $a_{1,j}=a_{2,j}$ or $a_{3,j}=a_{4,j}$ in each column $C_j$ of $A$. In addition $1\leq |\{j\in[4]|\ a_{1,j}=a_{2,j}\}|\leq2$ and $1\leq |\{j\in[4]|\ a_{3,j}=a_{4,j}\}|\leq2$. So $|\{j\in[4]|\ a_{1,j}=a_{2,j}\}|=|\{j\in[4]|\ a_{3,j}=a_{4,j}\}|=2$ and $\{j\in[4]|\ a_{1,j}=a_{2,j}\}\cap\{j\in[4]|\ a_{3,j}=a_{4,j}\}=\emptyset $. Without loss of generality, assume that
\[A=\left[\begin{array}{ccccc}
   {\bf a} & {\bf b} & c & d \\
   {\bf a} & {\bf b} & d & c \\
   c & d & {\bf a} & {\bf b} \\
   d & c & {\bf a} & {\bf b} \\
   a_{5,1} & a_{5,2} & a_{5,3} & a_{5,4} \\
   \end{array}\right].\]
   So $a_{5,1}=a_{5,3}=b$ and $a_{5,2}=a_{5,4}=a$, a contradiction. Therefore, $drn(K_5-2K_2)=5$.\\
   The following matrix is a derangement $6$-representation matrix of $K_6-2K_2$:
\[L(K_6-2K_2)=\left[\begin{array}{cccccc}
   {\bf2} & 6 & {\bf4} & 1 & 3 & {\bf5} \\
   {\bf2} & 1 & {\bf4} & 3 & 6 & {\bf5} \\
   {\bf3} & 4 & 5 & {\bf6} & 1 & {\bf2} \\
   {\bf3} & 5 & 1 & {\bf6} & 4 & {\bf2} \\
   1 & 2 & 3 & 4 & 5 & 6 \\
   4 & 3 & 6 & 5 & 2 & 1 \\
   \end{array}\right].\]
   Therefore, $drn(K_6-2K_2)\leq6$. We show that $drn(K_6-2K_2)>5$. In contrary, suppose that $A=[a_{i,j}]_{6\times5}$ is a derangement $5$-representation matrix of $K_6-2K_2$ where the corresponding vertex to the $i$-th row is $v_i$, $v_1\nsim v_2$ and $v_3\nsim v_4$. Therefore we have $a_{1,j}=a_{2,j}$ or $a_{3,j}=a_{4,j}$ in each column $C_j$ of $A$. In addition $1\leq |\{j\in[4]|\ a_{1,j}=a_{2,j}\}|\leq3$ and $1\leq |\{j\in[4]|\ a_{3,j}=a_{4,j}\}|\leq3$. So there are three cases:\\
{\bf Case 1.} $|\{j\in[4]|\ a_{1,j}=a_{2,j}\}|=|\{j\in[4]|\ a_{3,j}=a_{4,j}\}|=3$ and $\{j\in[4]|\ a_{1,j}=a_{2,j}\}\cap\{j\in[4]|\ a_{3,j}=a_{4,j}\}=\{j_0\}$. Without loss of generality, assume that $j_0=3$, $\{j\in[4]|\ a_{1,j}=a_{2,j}\}=\{1,2,3\}$ and $\{j\in[4]|\ a_{3,j}=a_{4,j}\}=\{3,4,5\}$ and
\[A=\left[\begin{array}{cccccc}
   {\bf a} & {\bf b} & {\bf c} & d & e \\
   {\bf a} & {\bf b} & {\bf c} & e & d \\
   a_{3,1} & a_{3,2} & {\bf x} & {\bf y} & {\bf z} \\
   a_{4,1} & a_{4,2} & {\bf x} & {\bf y} & {\bf z} \\
   a_{5,1} & a_{5,2} & a_{5,3} & a_{5,4} & a_{5,5} \\
    a_{6,1} & a_{6,2} & a_{6,3} & a_{6,4} & a_{6,5} \\
   \end{array}\right].\]
   In this case, we have $y,z\notin\{d,e\}$ and hence $\{d,e\}\setminus\{x,y,z\}\neq\emptyset$. Therefore at least one of numbers $\{d, e\}$ must apear in the cells $\{a_{3,1}, a_{4,2}\}$ or $\{a_{4,1}, a_{3,2}\}$. But in this case, that number must apear in the cells $\{a_{5,3}, a_{6,3}\}$, a contradiction.\\
   {\bf Case 2.} $|\{j\in[4]|\ a_{1,j}=a_{2,j}\}|=2$ and $|\{j\in[4]|\ a_{3,j}=a_{4,j}\}|=3$ and $\{j\in[4]|\ a_{1,j}=a_{2,j}\}\cap\{j\in[4]|\ a_{3,j}=a_{4,j}\}=\emptyset$. Without loss of generality, assume that $\{j\in[4]|\ a_{1,j}=a_{2,j}\}=\{1,2\}$ and $\{j\in[4]|\ a_{3,j}=a_{4,j}\}=\{3,4,5\}$ and
\[A=\left[\begin{array}{cccccc}
   {\bf a} & {\bf b} & c & d & e \\
   {\bf a} & {\bf b} & d & e & c \\
   a_{3,1} & a_{3,2} & {\bf x} & {\bf y} & {\bf z} \\
   a_{4,1} & a_{4,2} & {\bf x} & {\bf y} & {\bf z} \\
   a_{5,1} & a_{5,2} & a_{5,3} & a_{5,4} & a_{5,5} \\
    a_{6,1} & a_{6,2} & a_{6,3} & a_{6,4} & a_{6,5} \\
   \end{array}\right].\]
   In this case, we have $|\{x,y,z\}\cap\{c,d,e\}|=1$ and hence $|\{c,d,e\}\setminus\{x,y,z\}|=2$. Therefore two numbers of $\{c, d, e\}$ must apear in the cells $\{a_{3,1}, a_{4,2}, a_{4,1}, a_{3,2}\}$. But in this case, that numbers must apear in the cells $\{a_{5,3}, a_{6,3}\}$, a contradiction.\\
   {\bf Case 3.} $|\{j\in[4]|\ a_{1,j}=a_{2,j}\}|=3$ and $|\{j\in[4]|\ a_{3,j}=a_{4,j}\}|=2$ and $\{j\in[4]|\ a_{1,j}=a_{2,j}\}\cap\{j\in[4]|\ a_{3,j}=a_{4,j}\}=\emptyset$. This case is similar to the previous case.\\
Therefore, $drn(K_6-2K_2)=6$.\\
(2) $K_{n-1}-K_2$ is an induced subgraph of $K_n-2K_2$. So $drn(K_n-2K_2)\geq drn(K_{n-1}-K_2)=n-1$ by Lemma \ref{inducedsubgraph} and Theorem \ref{thm3}. Suppose that $A$ is a latin square of order $n-1$ such that the first three rows of $A$ are $R_1=\left[\begin{array}{ccccccccc} 1 & 2 & 3 & | & 4 & 5 & \cdots & n-2 & n-1\\ \end{array}\right]$ (corresponding to $v_1$), $R_2=\left[\begin{array}{ccccccccc} 3 & 1 & 2 & | & 5 & 6 & \cdots & n-1 & 4 \\ \end{array}\right]$ (corresponding to $v_2$), and $R_3=\left[\begin{array}{ccccccccc} 2 & 3 & 1 & | & 6 & 7 & \cdots & 4 & 5 \\ \end{array}\right]$ (corresponding to $v_3$). At first, we replace the second row of $A$ with
\[R'_2=\left[\begin{array}{ccccccccc} 1 & 2 & 3 & | & 5 & 6 & \cdots & n-1 & 4 \\ \end{array}\right]\]
to achieve a derangement $(n-1)$-representation matrix $A'$ of $K_{n-1}-K_2$ in which $v_1$ and $v_2$ are not adjacent. Now the following matrix is a derangement $(n-1)$-representation matrix of $K_n-2K_2$:
\[L(K_n-2K_2)=\left[\begin{array}{c}
A' \\
\hline
\begin{array}{ccccccccc} 3 & 1 & 2 & | & 6 & 7 & \cdots & 4 & 5 \\ \end{array} 
\end{array}\right],\]
where the second part of the $n$-th row is the second part of the third row and the corresponding vertex to the last row is not adjacent to $v_3$. Therefore, $drn(K_n-2K_2)=n-1$.
}\end{proof}
\begin{theorem}\label{k3}
(1) $drn(K_n-K_3)=n$ when $n\in\{4,5,6\}$, and\\ 
(2) $drn(K_n-K_3)= n-1$ when $n\in\mathbb{N}_{\geq7}$.
\end{theorem}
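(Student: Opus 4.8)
The plan is to prove the two parts by matching lower and upper bounds, with the lower bound of part (1) and the upper bound of part (2) both controlled by a ``cycle'' analysis of the two symbols left free in each column. First, deleting one vertex of the absent triangle from $K_n-K_3$ leaves an induced copy of $K_{n-1}-K_2$, so Lemma \ref{inducedsubgraph} and Theorem \ref{thm3}(2) give $drn(K_n-K_3)\ge n-1$ for every $n\ge5$; this already settles the lower bound of part (2). For part (1) I would instead prove $drn(K_n-K_3)\ge n$ directly, by showing that for $n\in\{4,5,6\}$ there is no derangement $(n-1)$-representation matrix $A$ of $K_n-K_3$. Arrange the rows of $A$ so that $v_1,v_2,v_3$ are the pairwise non-adjacent vertices and $v_4,\dots,v_n$ are adjacent to everything. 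Since $\{v_4,\dots,v_n\}$ is a clique, in each column $j$ the rows $R_4,\dots,R_n$ carry $n-3$ distinct symbols, leaving a two-element set $T_j$ of ``free'' symbols; and since each of $v_1,v_2,v_3$ is adjacent to all of $v_4,\dots,v_n$, the $j$-th entries of $R_1,R_2,R_3$ all lie in $T_j$. A short count (each symbol occurs exactly once in each of the $n-3$ permutation rows $R_4,\dots,R_n$, hence is used by those rows in exactly $n-3$ columns) shows that every symbol lies in exactly two of the sets $T_1,\dots,T_{n-1}$.

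Now form the multigraph $H$ on the $n-1$ columns with one edge $\{j,j'\}$ for each symbol lying in both $T_j$ and $T_{j'}$. Then $H$ is $2$-regular, hence a disjoint union of $t$ cycles, each of length at least $2$, so $t\le(n-1)/2$, which forces $t\le2$ when $n\le6$. The crucial point is that a permutation row $R_i$, restricted to the columns forming one cycle $Z$ of $H$, can only be one of exactly two bijections onto the symbols occurring there (the two ``rotations'' around $Z$), and two rows rotating oppositely on $Z$ disagree in every column of $Z$. Hence each of $R_1,R_2,R_3$ is encoded by a bit-string in $\{0,1\}^t$, and it follows that two of these rows agree in some column if and only if the corresponding bit-strings agree in some coordinate. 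Injectivity of the representation makes the three strings distinct, and pairwise non-adjacency of $v_1,v_2,v_3$ makes any two of them non-complementary. But for $t=1$ there is no room for three distinct bit-strings, and for $t=2$ any three distinct strings in $\{0,1\}^2$ contain a complementary pair; either way we reach a contradiction, so $drn(K_n-K_3)\ge n$ for $n\in\{4,5,6\}$.

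For the upper bounds, part (1) is completed by exhibiting, for each $n\in\{4,5,6\}$, an explicit $n\times n$ derangement $n$-representation matrix of $K_n-K_3$: starting from a Latin square of order $n$, alter it slightly so that the rows assigned to $v_1,v_2,v_3$ coincide pairwise in one position each while every other pair of rows stays discordant. These three small matrices are routine to verify, and together with the lower bound they give $drn(K_n-K_3)=n$. For part (2), when $n\ge7$ I would run the cycle analysis in reverse. Choose permutations $s,t$ of $[n-1]$ that differ in every position and such that the permutation sending $s_j$ to $t_j$ for each $j$ --- a derangement, since $s$ and $t$ differ everywhere --- has at least three cycles; this is possible precisely because $n-1\ge6$ (take $s$ to be the identity permutation and $t$ a product of three or more disjoint nontrivial cycles). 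Extend the $2\times(n-1)$ Latin rectangle with rows $s,t$ to a Latin square $L$ of order $n-1$, assign $s$ to $v_1$, the $n-3$ rows of $L$ other than $s$ and $t$ to $v_4,\dots,v_n$, and to $v_2,v_3$ two ``blends'' of $s$ and $t$ obtained by deciding, cycle by cycle of that derangement, whether to follow $s$ or $t$ throughout the cycle. Taking for $v_2$ the blend that follows $t$ only on the first cycle and for $v_3$ the one that follows $t$ only on the second cycle makes $v_1,v_2,v_3$ pairwise distinct and pairwise non-adjacent, while every blend differs from each row of $v_4,\dots,v_n$ in every column; hence this $n\times(n-1)$ matrix is a derangement $(n-1)$-representation matrix of $K_n-K_3$, giving $drn(K_n-K_3)\le n-1$ and therefore equality.

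The main obstacle is the bookkeeping of the second paragraph: proving cleanly that each symbol lies in exactly two of the $T_j$ and that the cycle structure of $H$ dictates, in all-or-nothing fashion, where two of the three independent rows may agree. Once that is in place, the inequality $t\le(n-1)/2$ does the rest and makes the threshold $n=7$ drop out exactly; and the same structure, read backwards, supplies the construction for part (2). One could instead dispatch $n=5,6$ by a direct column-by-column case analysis like the one used above for $K_n-2K_2$, but that would be considerably longer.
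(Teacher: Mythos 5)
Your proposal is correct, and its core is a genuinely different argument from the paper's for the hardest part of (1). Where the paper rules out a derangement $(n-1)$-representation of $K_n-K_3$ for $n=5$ and $n=6$ by two separate, fairly long column-by-column case analyses (and handles $n=4$ via the induced $P_3$), you give one uniform structural argument: the clique rows leave a two-element free set $T_j$ in each column, each symbol is free in exactly two columns, the resulting $2$-regular multigraph on columns splits into $t\le\lfloor (n-1)/2\rfloor$ cycles, each of $R_1,R_2,R_3$ is one of the two rotations on each cycle and is thus encoded by a string in $\{0,1\}^t$, and three distinct pairwise non-complementary strings cannot exist when $t\le 2$. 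All the steps check out (the bipartite column--symbol cycle has exactly two perfect matchings, the components of $H$ cover all columns so a row is determined by its string, and injectivity plus pairwise non-adjacency translate exactly into ``distinct and non-complementary''), and the argument has the added virtue of explaining why the threshold sits at $n=7$: only then can $t\ge 3$. Your part (2) is essentially the paper's proof seen from the right angle: the ``blend'' construction from a derangement $t\circ s^{-1}$ with at least three cycles specializes, for the cycle structure $(1\,2)(3\,4)(5\,6\cdots n-1)$, to precisely the matrix the paper writes down, and your lower bound via the induced $K_{n-1}-K_2$ is a slight shortcut past the paper's detour through $K_{n-2}$. The only incomplete piece is the upper bound in part (1): you assert that suitable $n\times n$ matrices for $n\in\{4,5,6\}$ exist and are ``routine to verify'' without producing them; they do exist (the paper exhibits all three), but in a full write-up you would need to display them or give a construction as concrete as your blend argument, since Theorem \ref{thm4} is unavailable here ($G^c$ has a single nontrivial clique, so $s=1$).
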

\begin{proof}{
(1) $P_3$ is an induced subgraph of $K_4-K_3$. So $drn(K_4-K_3)\geq drn(P_3)=4$ by Lemma \ref{inducedsubgraph}. In addition, the following matrix is a derangement $4$-representation matrix of $K_4-2K_2$:
\[L(K_4-2K_2)=\left[\begin{array}{cccc}
   {\bf1} & {\bf4} & 2 & 3 \\
   {\bf1} & {\bf4} & {\bf3} & 2 \\
   {\bf1} & 2 & {\bf3} & 4 \\
   2 & 3 & 4 & 1 \\
   \end{array}\right].\]
   Therefore, $drn(K_4-K_3)=4$.\\
The following matrix is a derangement $5$-representation matrix of $K_5-K_3$:
\[L(K_5-K_3)=\left[\begin{array}{ccccc}
   {\bf1} & {\bf2} & 3 & 4 & {\bf5} \\
   {\bf1} & 4 & {\bf3} & 2 & {\bf5} \\
   4 & {\bf2} & {\bf3} & 1 & {\bf5} \\
   2 & 3 & 5 & 4 & 1 \\
   3 & 1 & 2 & 5 & 4 \\
   \end{array}\right].\]
   Therefore, $drn(K_5-K_3)\leq5$. We show that $drn(K_5-K_3)>4$. In contrary, suppose that $A=[a_{i,j}]_{5\times4}$ is a derangement $4$-representation matrix of $K_5-K_3$ where the corresponding vertex to the $i$-th row is $v_i$, $v_1\nsim v_2$, $v_1\nsim v_3$ and $v_2\nsim v_3$. Therefore we have $a_{1,j}=a_{2,j}$, $a_{2,j}=a_{3,j}$ or $a_{3,j}=a_{1,j}$ in each column $C_j$ of $A$. Let $a^{i,j}=|\{k\in[4]|\ a_{i,k}=a_{j,k}\}|$. So $1\leq a^{i,j}\leq2$ for any $\{i,j\}\subset\{1,2,3\}$ and $a^{1,2}+a^{2,3}+a^{3,1}\geq4$. So $a^{i,j}=2$ for some $\{i,j\}\subset\{1,2,3\}$. Without loss of generality, assume that $a^{1,2}=2$ and
\[A=\left[\begin{array}{ccccc}
   {\bf a} & {\bf b} & c & d \\
   {\bf a} & {\bf b} & d & c \\
   a_{3,1} & a_{3,2} & a_{3,3} & a_{3,4} \\
   a_{4,1} & a_{4,2} & a_{4,3} & a_{4,4} \\
   a_{5,1} & a_{5,2} & a_{5,3} & a_{5,4} \\
   \end{array}\right].\]
   If $(a_{3,3}, a_{3,4})=(a_{1,3},a_{2,4})$ or $(a_{1,4},a_{2,3})$ then $a_{3,3}=a_{3,4}$, a contradiction. If $(a_{3,3}, a_{3,4})=(c,d)$, then $a_{3,1}=a$ or $a_{3,2}=b$ which force the equality of the first and the third rows, a contradiction. Similarly, we have a contradiction when $(a_{3,3}, a_{3,4})=(d,c)$. Therefore, $drn(K_5-K_3)=5$.\\
   The following matrix is a derangement $6$-representation matrix of $K_6-K_3$:
\[L(K_6-K_3)=\left[\begin{array}{cccccc}
   {\bf1} & {\bf2} & 3 & 4 & {\bf5} & {\bf6} \\
   {\bf1} & {\bf2} & {\bf4} & {\bf3} & 6 & 5 \\
   2 & 1 & {\bf4} & {\bf3} & {\bf5} & {\bf6} \\
   3 & 4 & 5 & 6 & 1 & 2 \\
   4 & 3 & 6 & 5 & 2 & 1 \\
   5 & 6 & 1 & 2 & 3 & 4 \\
   \end{array}\right].\]
   Therefore, $drn(K_6-K_3)\leq6$. We show that $drn(K_6-K_3)>5$. In contrary, suppose that $A=[a_{i,j}]_{6\times5}$ is a derangement $5$-representation matrix of $K_6-K_3$ where the corresponding vertex to the $i$-th row is $v_i$, $v_1\nsim v_2$, $v_2\nsim v_3$ and $v_3\nsim v_1$. Therefore we have $a_{1,j}=a_{2,j}$, $a_{2,j}=a_{3,j}$ or $a_{3,j}=a_{1,j}$ in each column $C_j$ of $A$. Let $a^{i,j}=|\{k\in[4]|\ a_{i,k}=a_{j,k}\}|$. So $1\leq a^{i,j}\leq3$ for any $\{i,j\}\subset\{1,2,3\}$ and $a^{1,2}+a^{2,3}+a^{3,1}\geq5$. So there are two cases:\\
{\bf Case 1.} $a^{i,j}=3$ for some $\{i,j\}\subset\{1,2,3\}$. Without loss of generality, assume that $a^{1,2}=3$ and $\{j\in[4]|\ a_{1,j}=a_{2,j}\}=\{1,2,3\}$. Therefore,
\[A=\left[\begin{array}{cccccc}
   {\bf a} & {\bf b} & {\bf c} & d & e \\
   {\bf a} & {\bf b} & {\bf c} & e & d \\
   a_{3,1} & a_{3,2} & a_{3,3} & a_{3,4} & a_{3,5} \\
   a_{4,1} & a_{4,2} & a_{4,3} & a_{4,4} & a_{4,5} \\
   a_{5,1} & a_{5,2} & a_{5,3} & a_{5,4} & a_{5,5} \\
    a_{6,1} & a_{6,2} & a_{6,3} & a_{6,4} & a_{6,5} \\
   \end{array}\right].\]
 If $(a_{3,4}, a_{3,5})=(a_{1,4},a_{2,5})$ or $(a_{1,5},a_{2,4})$ then $a_{3,4}=a_{3,5}$, a contradiction. If $(a_{3,4}, a_{3,5})\in\{(d,e), (e,d)\}$, then $(a_{3,1},a_{3,2},a_{3,3})\in\{(a,c,b), (c,b,a), (b,a,c)\}$. Without loss of generality, assume that $(a_{3,4}, a_{3,5})=(d,e)$ and $(a_{3,1},a_{3,2},a_{3,3})=(a,c,b)$. Then
 \[A=\left[\begin{array}{cccccc}
   {\bf a} & {\bf b} & {\bf c} & {\bf d} & {\bf e} \\
   {\bf a} & {\bf b} & {\bf c} & e & d \\
   {\bf a} & c & b & {\bf d} & {\bf e} \\
   a_{4,1} & a_{4,2} & a_{4,3} & a_{4,4} & a_{4,5} \\
   a_{5,1} & a_{5,2} & a_{5,3} & a_{5,4} & a_{5,5} \\
    a_{6,1} & a_{6,2} & a_{6,3} & a_{6,4} & a_{6,5} \\
   \end{array}\right].\]
 But in this case we have $\{b,c,d,e\}\subset\{a_{4,1},a_{5,1},a_{6,1}\}$, a contradiction.\\
{\bf Case 2.} $a^{i,j}=2$ and $a^{j,k}=2$ where $\{i,j,k\}=[3]$. Without loss of generality, assume that $a^{1,2}=a^{2,3}=2$ and $\{j\in[5]|\ a_{1,j}=a_{2,j}\}=\{1,2\}$ and $\{j\in[5]|\ a_{2,j}=a_{3,j}\}=\{3,4\}$ and
\[A=\left[\begin{array}{cccccc}
   {\bf a} & {\bf b} & c & d & e \\
   {\bf a} & {\bf b} & {\bf d} & {\bf e} & c \\
   a_{3,1} & a_{3,2} & {\bf d} & {\bf e} & a_{3,5} \\
   a_{4,1} & a_{4,2} & a_{4,3} & a_{4,4} & a_{4,5} \\
   a_{5,1} & a_{5,2} & a_{5,3} & a_{5,4} & a_{5,5} \\
    a_{6,1} & a_{6,2} & a_{6,3} & a_{6,4} & a_{6,5} \\
   \end{array}\right].\]
   Since $\{a_{1,5},a_{2,5},a_{3,5}\}\cap\{a_{4,5},a_{5,5},a_{6,5}\}=\emptyset$ and $\{a_{1,5},a_{2,5},a_{3,5}\}\cup\{a_{4,5},a_{5,5},a_{6,5}\}\subseteq [5]$  hence $a_{3,5}\in\{c,e\}$. But $a_{3,5}\neq a_{3,4}=e$ and so $a_{3,5}=c$, which contradicts by $a^{2,3}=2$.\\
Therefore, $drn(K_6-K_3)=6$.\\
(2) $K_{n-2}$ is an induced subgraph of $K_n-K_3$. So $drn(K_n-K_3)\geq drn(K_{n-2})=n-2$ by Lemma \ref{inducedsubgraph} and Theorem \ref{thm3}. We show that $drn(K_n-K_3)>n-2$. In contrary, suppose that $A=[a_{i,j}]_{n\times(n-2)}$ is a derangement $(n-2)$-representation matrix of $K_n-K_3$ where the corresponding vertex to $R_i$ (the $i$-th row of $A$) is $v_i$, $v_1\nsim v_2$, $v_2\nsim v_3$ and $v_3\nsim v_1$. The subgraphs induced by removing any two vertices of $\{v_1,v_2,v_3\}$ are cliques of order $n-2$. So the resulting matrices obtained from $A$ by removing any two rows of $\{R_1,R_2,R_3\}$ are latin squares of order $n-2$. In these latin squares, the last $n-3$ rows are the same rows, implying that $R_1=R_2=R_3$, a contradiction.\\
Suppose that $A$ is a latin square of order $n-1$ such that the first two rows of $A$ are
\[R_1=\left[\begin{array}{ccccccccccc} 1 & 2 & | & 3 & 4 & | & 5 & 6 & \cdots & n-2 & n-1\\ \end{array}\right]\]
(corresponding to $v_1$) and $R_2=\left[\begin{array}{ccccccccccc} 2 & 1 & | & 4 & 3 & | & 6 & 7 & \cdots & n-1 & 5 \\ \end{array}\right]$ (corresponding to $v_2$). At first, we replace the second row of $A$ by $R'_2=\left[\begin{array}{ccccccccccc} 1 & 2 & | & 4 & 3 & | & 6 & 7 & \cdots & n-1 & 5 \\ \end{array}\right]$ to achieve a derangement $(n-1)$-representation matrix $A'$ of $K_{n-1}-K_2$ in which $v_1$ and $v_2$ are not adjacent. Now the following matrix is a derangement $(n-1)$-representation matrix of $K_n-K_3$:
\[L(K_n-K_3)=\left[\begin{array}{c}
A' \\
\hline
\begin{array}{ccccccccccc} 2 & 1 & | & 3 & 4 & | & 6 & 7 & \cdots & n-1 & 5 \\ \end{array} 
\end{array}\right],\]
where the second part of the $n$-th row is the second part of the second row and the corresponding vertex to the last row is not adjacent to $v_1$ and $v_2$. Therefore, $drn(K_n-K_3)=n-1$.
}\end{proof}
\begin{theorem}\label{p4}
(1) $drn(K_4-P_4)=4$.\\
$drn(K_n-P_4)= n-1$ when $n\in\mathbb{N}_{\geq5}$.
\end{theorem}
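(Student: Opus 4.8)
Here is how I would attack the two statements.

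The overall plan is the template of the preceding ``nearly complete graph'' results: a lower bound from a well chosen induced subgraph, and an upper bound from an explicit derangement representation matrix. For part (1), label the deleted path so that the only non‑adjacent pairs of $K_4-P_4$ are $\{v_1,v_2\},\{v_2,v_3\},\{v_3,v_4\}$. Then $\{v_1,v_2,v_4\}$ induces $P_3$ (the path $v_1v_4v_2$), so Lemma \ref{inducedsubgraph} and Example \ref{example1} give $drn(K_4-P_4)\ge drn(P_3)=4$. For the reverse inequality I would exhibit the matrix
\[\left[\begin{array}{cccc} 1&2&3&4\\ 1&2&4&3\\ 3&1&4&2\\ 3&4&1&2\end{array}\right]\qquad(\text{row }i\leftrightarrow v_i):\]
rows $1,2$ agree in columns $1,2$, rows $2,3$ agree in column $3$, rows $3,4$ agree in columns $1,4$, and the remaining three pairs of rows are discordant, so this is a derangement $4$-representation and $drn(K_4-P_4)=4$. (Since $P_4$ is self-complementary, $K_4-P_4\cong P_4$, so this just says $drn(P_4)=4$.)

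For part (2), the lower bound is quick: deleting an interior vertex of the deleted $P_4$ leaves a graph with a single non-edge, namely $K_{n-1}-K_2$, so since $n-1\ge4$ Theorem \ref{thm3}(2) and Lemma \ref{inducedsubgraph} give $drn(K_n-P_4)\ge drn(K_{n-1}-K_2)=n-1$. For the matching upper bound I would build an $n\times(n-1)$ matrix in the ``Latin square plus a modified row plus an appended row'' style of Theorem \ref{p3} and the $K_n-2K_2$, $K_n-K_3$ theorems, realizing the deleted path as $v_2v_1v_3v_n$. Start from a Latin square $A$ of order $n-1$ with $R_1=(1,2,\dots,n-1)$, with $R_2(1)=2,\ R_2(2)=1$, and with $R_3(2)=3,\ R_3(3)=1$; these three pairwise discordant partial rows extend to a Latin square once $n\ge5$ (complete $R_2$ by a derangement of $\{3,\dots,n-1\}$, choose $R_3(1)\in\{4,\dots,n-1\}$, complete $R_3$ compatibly, and extend the $3\times(n-1)$ Latin rectangle). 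Now: (a) swap the first two entries of $R_2$, getting $R_2'$ with $R_2'(1)=1,\ R_2'(2)=2$; since in columns $1,2$ the values $1,2$ occur only in row $R_1$, this makes $v_1\not\sim v_2$, with $R_2'$ agreeing with no row but $R_1$. (b) Swap entries $2,3$ of $R_3$, getting $R_3'$ with $R_3'(3)=3,\ R_3'(2)=1$; now $v_1\not\sim v_3$, and because step (a) vacated the value $1$ in column $2$, $R_3'$ is still discordant from $R_4,\dots,R_{n-1}$ and from $R_2'$ (they disagree in column $2$ and in every other column), so $v_2\sim v_3$. (c) Append the unmodified $R_3$ as the $n$-th row: it agrees with $R_3'$ in column $1$ (indeed in every column outside $\{2,3\}$), so $v_3\not\sim v_n$, while it is discordant from $R_1,R_2',R_4,\dots,R_{n-1}$ and distinct from every row of the modified matrix. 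This produces a derangement $(n-1)$-representation of $K_n-P_4$, and with the lower bound $drn(K_n-P_4)=n-1$.

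The only real work is the column-by-column verification in part (2)(a)--(c): confirming that the two transpositions introduce precisely the non-edges $\{v_1,v_2\}$ and $\{v_1,v_3\}$ and nothing else, that appending the copy of $R_3$ adds only the non-edge $\{v_3,v_n\}$ and keeps all $n$ rows distinct, and that the Latin square with the prescribed partial first three rows exists — the last point being routine since three pairwise discordant partial rows always extend to a Latin square. The two lower bounds and the observation $K_4-P_4\cong P_4$ are immediate.
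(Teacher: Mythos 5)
Your proposal is correct and follows essentially the same strategy as the paper: the lower bounds come from the induced subgraphs $P_3$ (for $n=4$) and $K_{n-1}-K_2$ (for $n\geq5$, obtained by deleting an interior vertex of the removed path), and the upper bounds come from an explicit Latin-square-based derangement representation matrix. The only difference is cosmetic: the paper realizes the deleted path as $v_1v_2v_nv_3$, modifies one row and appends a hybrid row, and treats $n\in\{5,6\}$ by separate explicit matrices, whereas your two-transposition construction with an appended copy of $R_3$ works uniformly for all $n\geq5$.
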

\begin{proof}{
(1) Obviously, $K_4-P_4\cong P_4$. Also $P_3$ is an induced subgraph of $K_3-P_4$. So $drn(K_4-P_4)\geq drn(P_3)=4$ by Lemma \ref{inducedsubgraph} and Theorem \ref{thm3}.\\
In addition, the following matrix is the derangement $4$-representation matrix of $K_4-P_4$ and so $drn(K_4-P_4)=4$.
\[L(K_4-P_4)=\left[\begin{array}{cccc}
   {\bf1} & 2 & {\bf3} & {\bf4} \\
   {\bf2} & 1 & 4 & 3 \\
   {\bf1} & 4 & {\bf3} & 2 \\
   {\bf2} & 3 & 1 & {\bf4} \\
   \end{array}\right].\]
(2) Suppose that $G=K_n-P_4$, $V(G)=\{v_i|\ i\in[n]\}$, $v_1\nsim v_2$, $v_2\nsim v_n$ and $v_n\nsim v_3$.
$G-v_n\cong K_{n-1}-K_2$ is an induced subgraph of $K_n-P_4$. So $drn(K_n-P_4)\geq drn(K_{n-1}-K_2)=n-1$ by Lemma \ref{inducedsubgraph} and Theorem \ref{thm3}.\\
For $n\in\{5,6\}$, the following matrices are the derangement $(n-1)$-representation matrices of $K_n-P_4$:
\[L(K_5-P_4)=\left[\begin{array}{cccc}
   1 & 2 & 3 & 4 \\
   {\bf3} & {\bf4} & 2 & {\bf1} \\
   {\bf2} & 1 & {\bf4} & 3 \\
   {\bf3} & {\bf4} & 1 & 2 \\
   {\bf2} & 3 & {\bf4} & {\bf1} \\
   \end{array}\right],
   L(K_6-P_4)=\left[\begin{array}{ccccc}
   {\bf1} & 2 & {\bf3} & {\bf4} & {\bf5} \\
   {\bf2} & {\bf1} & {\bf3} & {\bf5} & 4 \\
   {\bf2} & {\bf1} & 4 & {\bf5} & 3 \\
   {\bf1} & 3 & 2 & {\bf4} & {\bf5} \\
   4 & 5 & 1 & 3 & 2 \\
   3 & 4 & 5 & 2 & 1 \\
   \end{array}\right].\]
Now suppose that $n\geq7$ and $A$ is a latin square of order $n-1$ such that the first three rows of $A$ are \[R_1=\left[\begin{array}{ccccccccc} 1 & 2 & 3 & | & 4 & 5 & \cdots & n-2 & n-1\\ \end{array}\right]\] (corresponding to $v_1$), $R_2=\left[\begin{array}{ccccccccc} 2 & 3 & 1 & | & 5 & 6 & \cdots & n-1 & 4 \\ \end{array}\right]$ (corresponding to $v_2$) and $R_3=\left[\begin{array}{ccccccccc} 3 & 1 & 2 & | & 6 & \cdots & n-1 & 4 & 5 \\ \end{array}\right]$ (corresponding to $v_3$). At first, we replace the second row of $A$ by $R'_2=\left[\begin{array}{ccccccccc} 1 & 2 & 3 & | & 5 & 6 & \cdots & n-1 & 4 \\ \end{array}\right]$ to achieve a derangement $(n-1)$-representation matrix $A'$ of $G-v_n$ in which $v_1$ and $v_2$ are not adjacent. Now the following matrix is a derangement $(n-1)$-representation matrix of $G$:
\[L(K_n-P_4)=\left[\begin{array}{c}
A' \\
\hline
\begin{array}{ccccccccccc} 3 & 1 & 2 & | & 5 & 6 & \cdots & n-1 & 4 \\ \end{array} 
\end{array}\right],\]
where the second part of the $n$-th row is the second part of the second row and the corresponding vertex to the last row ($v_n$) is not adjacent to $v_2$ and $v_3$. Therefore, $drn(K_n-K_3)=n-1$.
}\end{proof}
\begin{theorem}
$drn(K_n-(P_3\cup P_2 ))=n-1$ when $n\in\mathbb{N}_{\geq5}$.
\end{theorem}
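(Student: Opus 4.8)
The plan is to prove the two matching bounds $drn(K_n-(P_3\cup P_2))\ge n-1$ and $\le n-1$; throughout $n\ge 5$, so $m:=n-1\ge 4$. For the lower bound, delete from $K_n-(P_3\cup P_2)$ the centre vertex of the removed $P_3$: the two edges of that $P_3$ disappear while the removed $P_2$ survives, so the remaining graph is $K_{n-1}-K_2$. Hence $K_{n-1}-K_2$ is an induced subgraph of $K_n-(P_3\cup P_2)$, and Lemma \ref{inducedsubgraph} together with Theorem \ref{thm3}(2) gives $drn(K_n-(P_3\cup P_2))\ge drn(K_{n-1}-K_2)=n-1$.

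For the upper bound I would construct an explicit derangement $(n-1)$-representation matrix of $G:=K_n-(P_3\cup P_2)$, labelling its non-edges as $\{v_1,v_n\},\{v_2,v_n\}$ (the $P_3$, with centre $v_n$) and $\{v_3,v_4\}$ (the $P_2$); these are vertex-disjoint because $n\ge 5$. Take a Latin square $A=[a_{i,j}]$ of order $m$ whose top-left $4\times 2$ block is $\left[\begin{smallmatrix}1&2\\2&1\\3&4\\4&3\end{smallmatrix}\right]$ (such $A$ exists: this $4\times 2$ array extends column by column to a $4\times m$ Latin rectangle and then to a Latin square). Let $R_1,\dots,R_m$ be the rows of $A$, to be assigned to $v_1,\dots,v_{n-1}$. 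Now make two changes: replace $R_4$ by the permutation $R_4'$ obtained from $R_4$ by interchanging its first two entries (so $R_4'$ coincides with $R_3$ exactly in columns $1,2$); and adjoin a last row $R_n:=(a_{1,1},a_{1,2},a_{2,3},a_{2,4},\dots,a_{2,m})$, which is a permutation of $[m]$ because $\{a_{1,1},a_{1,2}\}=\{1,2\}$ and $\{a_{2,3},\dots,a_{2,m}\}=[m]\setminus\{1,2\}$. The resulting $n\times(n-1)$ matrix, with its $i$-th row assigned to $v_i$, is the candidate representation.

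It then remains to verify four points. (a) Among $v_1,\dots,v_{n-1}$, passing from $R_4$ to $R_4'$ creates exactly the non-edge $\{v_3,v_4\}$: $R_4'$ differs from $R_3$ only in columns $\ge 3$, and in columns $1,2$ it carries only the symbols $3,4$, which cannot equal the corresponding entry of any $R_i$ with $i\notin\{3,4\}$; so every pair other than $\{v_3,v_4\}$ stays adjacent. (b) $R_n$ agrees with $R_1$ exactly in columns $1,2$ and with $R_2$ exactly in columns $3,\dots,m$ (both ranges nonempty as $m\ge 4$), whence $v_n\not\sim v_1$ and $v_n\not\sim v_2$. (c) For each $i\in\{3,\dots,n-1\}$ and each column $j$, the entry $R_n(j)$ lies in $\{a_{1,j},a_{2,j}\}$, while $R_3(j)$ and (for $i\ge 5$) $a_{i,j}$ differ from both by the Latin-square property, and $R_4'(j)$ — which is $3$ or $4$ when $j\le 2$ and $a_{4,j}$ when $j\ge 3$ — also differs from both; hence $v_n\sim v_i$ for every $i\in\{3,\dots,n-1\}$. (d) The same observations show the $n$ rows are pairwise distinct, so the map is injective. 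Combined with the lower bound, this gives $drn(K_n-(P_3\cup P_2))=n-1$.

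The one place that needs genuine care is item (c): one must check that the two gadgets — the appended row realizing the $P_3$ and the altered row $R_4'$ realizing the $P_2$ — do not interfere, i.e. that $v_n$ stays adjacent to both $v_3$ and $v_4$. This is precisely why the $4\times 2$ corner is chosen as above: in columns $1,2$ the symbols $1,2$ used by $R_1,R_2,R_n$ are kept disjoint from the symbols $3,4$ used by $R_3,R_4'$, while in columns $\ge 3$ all coincidences are controlled by the Latin-square property. The existence of a Latin square with that prescribed corner is routine, and for the small cases $n=5,6$ one may simply exhibit the matrix directly.
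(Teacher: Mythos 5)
Your strategy is essentially the paper's: the lower bound comes from the induced $K_{n-1}-K_2$ obtained by deleting the centre of the removed $P_3$, and the upper bound from a Latin square of order $m=n-1$ whose first two columns carry two disjoint $2\times 2$ intercalates, with one row flipped to create the $P_2$ non-edge and one hybrid row appended to create the $P_3$. Your adjacency checks (a)--(d) are correct, and the construction does work for $n=5$ and for $n\ge 7$.

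The gap is the existence claim for $A$ when $n=6$. No Latin square of order $5$ has top-left $4\times 2$ block $\left[\begin{smallmatrix}1&2\\2&1\\3&4\\4&3\end{smallmatrix}\right]$: both of its first two columns would have to end in the symbol $5$, so the fifth row would begin $5\ 5$. Structurally, the first two columns of a Latin square of order $m$ encode a derangement $\sigma$ of $[m]$ via $a_{i,2}=\sigma(a_{i,1})$, and your prescribed corner forces $\sigma$ to contain the two disjoint transpositions $(1\ 2)$ and $(3\ 4)$ --- impossible for $m=5$, since a derangement of $[5]$ cannot have cycle type $2+2+1$. Accordingly, your parenthetical ``extends column by column to a $4\times m$ Latin rectangle'' fails at $m=5$ (symbol $5$ would need one cell in each of four rows but only three new columns exist), while for $m=4$ and $m\ge 6$ the extension is genuinely routine (Ryser's condition $N(\text{symbol})\ge 4+2-m$ holds). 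So the case $n=6$ is not covered by your argument, and your closing remark that for $n\in\{5,6\}$ ``one may simply exhibit the matrix directly'' is a deferral, not a proof. This is exactly why the paper displays explicit matrices for $n\in\{5,6\}$ and runs its general gadget only for $n\ge 7$; to complete your proof you must either exhibit a derangement $5$-representation matrix of $K_6-(P_3\cup P_2)$ or alter the gadget so that it does not require two disjoint intercalates in two columns of an order-$5$ Latin square.
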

\begin{proof}{
Suppose that $G=K_n-(P_3\cup P_2 )$, $V(G)=\{v_i|\ i\in[n]\}$, $v_1\nsim v_2$, $v_n\nsim v_3$ and $v_n\nsim v_4$. $G-v_n\cong K_{n-1}-K_2$ is an induced subgraph of $G$. So $drn(G)\geq drn(K_{n-1}-K_2)=n-1$ by Lemma \ref{inducedsubgraph} and Theorem \ref{thm3}.\\
For $n\in\{5,6\}$, the following matrices are the derangement $(n-1)$-representation matrices of $G$:
\[L(K_5-(P_3\cup P_2 ))=\left[\begin{array}{cccc}
   1 & 2 & {\bf3} & {\bf4} \\
   2 & 1 & {\bf3} & {\bf4} \\
   {\bf3} & {\bf4} & 1 & 2 \\
   4 & 3 & {\bf2} & {\bf1} \\
   {\bf3} & {\bf4} & {\bf2} & {\bf1} \\
   \end{array}\right],
   L(K_6-(P_3\cup P_2 )=\left[\begin{array}{ccccc}
   1 & 2 & {\bf3} & {\bf4} & {\bf5} \\
   2 & 1 & {\bf3} & {\bf4} & {\bf5} \\
   {\bf4} & {\bf3} & 5 & 1 & 2 \\
   5 & 4 & {\bf2} & 3 & {\bf1} \\
   3 & 5 & 1 & 2 & 4 \\
   {\bf4} & {\bf3} & {\bf2} & 5 & {\bf1} \\
      \end{array}\right].\]
Now suppose that $n\geq7$ and $A$ is a latin square of order $n-1$ such that the first four rows of $A$ are $R_1=\left[\begin{array}{cccccccc} 1 & 2 & | & 3 & 4 & \cdots & n-2 & n-1\\ \end{array}\right]$ (corresponding to $v_1$), $R_2=\left[\begin{array}{ccccccccc}  2 & 1 & | & 4 & 5 & \cdots & n-1 & 3 \\ \end{array}\right]$ (corresponding to $v_2$), $R_3=\left[\begin{array}{ccccccccc} 3 & 4 & | & 5 & 6 & \cdots & (n-1) & 1 & 2 \\ \end{array}\right]$ (corresponding to $v_3$) and  $R_4=\left[\begin{array}{ccccccccc} 4 & 3 & | & 6 & \cdots & (n-1) & 1 & 2 & 5 \\ \end{array}\right]$ (corresponding to $v_4$). At first, we replace the second row of $A$ by $R'_2=\left[\begin{array}{ccccccccc}  1 & 2 & | & 4 & 5 & \cdots & n-1 & 3 \\ \end{array}\right]$ to achieve a derangement $(n-1)$-representation matrix $A'$ of $G-v_n$ in which $v_1$ and $v_2$ are not adjacent. Now the following matrix is a derangement $(n-1)$-representation matrix of $G$:
\[L(K_n-(P_3\cup P_2 ))=\left[\begin{array}{c}
A' \\
\hline
\begin{array}{cccccccc} 3 & 4 & | & 6 & \cdots & 1 & 2 & 5 \\ \end{array} 
\end{array}\right],\]
where the second part of the $n$-th row is the second part of  $R_4$ and the corresponding vertex to the last row ($v_n$) is not adjacent to $v_3$ and $v_4$. Therefore, $drn(K_n-(P_3\cup P_2 ))=n-1$.
}\end{proof}
In Theorems \ref{thm2}, \ref{p3}, \ref{p4} and \ref{k3}, we find the derangement representation number of $K_n-P_k$ (for $2\leq k\leq 4$) and $K_n-C_3$. In the following theorems, we obtain upper and lower bounds for the derangement representation number of $K_n-P_k$ and $K_n-C_3$ for the other values of $n$ and $k$.
\begin{theorem}\label{Pkc}
Let $n, k\in\mathbb{N}$. If $n\geq k\geq5$, then $n-\bigg\lceil\dfrac{k}{2}\bigg\rceil+1\leq drn(K_n-P_k)\leq n$.
\end{theorem}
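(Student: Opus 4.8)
To establish Theorem~\ref{Pkc} I would prove the two inequalities separately.

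\emph{Lower bound.} The plan is to locate an induced copy of $K_m-K_2$ in $G=K_n-P_k$ with $m=n-\lceil k/2\rceil+1$ and then invoke Lemma~\ref{inducedsubgraph} together with Theorem~\ref{thm3}(2). Label the deleted path as $u_1u_2\cdots u_k$ and delete from $G$ every vertex $u_j$ with $j$ odd and $3\le j\le k$; there are exactly $\lceil k/2\rceil-1$ of these, so $m$ vertices survive. Among the surviving vertices the only non-adjacent pair is $\{u_1,u_2\}$: any other missing edge $u_ju_{j+1}$ with $j\ge 2$ has an odd endpoint index $\ge 3$, which has been deleted, so the induced subgraph on the remaining vertices is $K_m-K_2$. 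Since $n\ge k\ge 5$ forces $m\ge\lfloor k/2\rfloor+1\ge 3$, and in fact $m\ge 4$ unless $(n,k)=(5,5)$, Theorem~\ref{thm3}(2) gives $drn(G)\ge drn(K_m-K_2)=m$, while in the single exceptional case $K_3-K_2\cong P_3$ has $drn\ge 4>3$ by Example~\ref{example1}. In all cases $drn(K_n-P_k)\ge n-\lceil k/2\rceil+1$.

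\emph{Upper bound.} Here the plan is to write down a derangement $n$-representation matrix built from the cyclic Latin square $L$ of order $n$, whose $i$-th row is $(i,i+1,\ldots,n,1,\ldots,i-1)$ (so $L$ represents $K_n$), by transposing the entries in two adjacent columns of a few rows. Label the vertices $v_1,\ldots,v_n$ so that the deleted path is $v_1v_2\cdots v_k$, assign row $i$ to $v_i$, and set $t=\lfloor k/2\rfloor$. The basic move is: transposing the entries of row $r$ in columns $c$ and $c+1$ makes $v_r$ agree with $v_{r-1}$ in column $c+1$ and with $v_{r+1}$ in column $c$, and creates no other agreement with any vertex, provided the rows of $v_{r-1}$ and $v_{r+1}$ were not themselves altered in columns $c,c+1$; in other words it inserts exactly the two non-edges $v_{r-1}v_r$ and $v_rv_{r+1}$. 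When $k=2t+1$ is odd, apply this move with $(c,c+1)=(1,2)$ in rows $2,4,\ldots,2t$; only even-indexed rows are altered, so its hypothesis holds for each, and the non-edges produced are exactly $v_1v_2,\ldots,v_{2t}v_{2t+1}$, i.e.\ $P_k$. When $k=2t$ is even, first apply the move with $(c,c+1)=(1,2)$ in rows $2,4,\ldots,2t-2$ (producing $K_n-P_{2t-1}$ on $v_1,\ldots,v_{2t-1}$), and then apply it once more in row $2t-1$ with $(c,c+1)=(3,4)$: this inserts the non-edges $v_{2t-2}v_{2t-1}$ (already present, hence harmless) and $v_{2t-1}v_{2t}$ (the one still needed), and since columns $3,4$ are disjoint from those used before it disturbs nothing else. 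In every case the resulting matrix is a derangement $n$-representation of $K_n-P_k$, so $drn(K_n-P_k)\le n$.

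\emph{Where the difficulty lies.} The delicate step is the last transposition in the even case. An adjacent transposition in a row of the cyclic square inevitably makes that row agree with \emph{both} of its cyclic neighbours, so it inserts a two-edge path of non-edges rather than a single one; carried out in row $2t$ it would wrongly make $v_{2t}$ non-adjacent to $v_{2t+1}$. The remedy is to carry it out in row $2t-1$ instead, whose cyclic neighbours are $v_{2t-2}$ (already non-adjacent) and $v_{2t}$ (the intended partner), and to use columns $3,4$ so that the entries of row $2t-1$ in columns $1,2$ stay intact — which is exactly what the earlier transposition in row $2t-2$ relies on to keep $v_{2t-2}v_{2t-1}$ a non-edge. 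Checking that this choice produces no further non-adjacency (in particular none with $v_{2t+1}$ or with the other altered rows) is the main computation, and it reduces to a short case analysis using that distinct rows of the cyclic Latin square differ in every column.
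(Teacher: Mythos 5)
Your proposal is correct and follows essentially the same route as the paper: the lower bound comes from exhibiting a large induced nearly-complete subgraph (you get $K_m-K_2$ uniformly, where the paper splits by parity and gets a clique for odd $k$ and $K_p-K_2$ for even $k$), and the upper bound perturbs a cyclic Latin square by adjacent-column transpositions, each of which creates exactly the two agreements with the cyclically neighbouring rows. The only differences are cosmetic — the paper performs one flip in each of rows $2,\ldots,k-1$ using pairwise disjoint column pairs, while you flip every other row in a single shared column pair plus one extra flip in the even case — and your verification of that extra step is sound.
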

\begin{proof}{
Let $G=K_n-P_k$, $V(G)=\{v_i|\ i\in[n]\}$, $E(G^c)=\{v_iv_{i+1}|\ i\in[k-1]\}$,  $V_0=\{v_{2i}|\ 1\leq i \leq\lfloor\frac{k}{2}\rfloor\}$ and $V_1=\{v_{2i-1}|\ 1\leq i\leq \lceil\frac{k}{2}\rceil\}$.\\
{\bf Case 1.} $k=2t+1$: The subgraph of $G$ induced by $V(G)\setminus V_0$ is a clique of order $n-t$. So $drn(G)\geq \omega(G)=n-t=n-\bigg\lceil\dfrac{k}{2}\bigg\rceil+1$.\\
{\bf Case 2.} $k=2t$: The subgraph of $G$ induced by $(V(G)\setminus V_0)\cup\{v_k\}$ is isomorphic to $K_p-K_2$ where $p=n-t+1$. So $drn(G)\geq drn(K_p-K_2)\geq p=n-\bigg\lceil\dfrac{k}{2}\bigg\rceil+1$.\\
Now we prove that $K_n-P_k$ is derangement $n$-representable. Consider the latin square $L=[l_{i,j}]_{n\times n}$ with $l_{i,j}=j-i+1$ when $j\geq i$ and $l_{i,j}=n+1+j-i$ when $j<i$.
\[L=\left[\begin{array}{ccccccc}
   1 & 2 & 3 & 4 & \cdots & n-1 & n \\
   n & 1 & 2 & 3 & \cdots & n-2 & n-1 \\
   n-1 & n & 1 & 2 & \cdots & n-3 & n-2 \\
   n-2 & n-1 & n & 1 & \cdots & n-4 & n-3 \\
   \vdots & \vdots & \vdots & \vdots & \ddots & \vdots & \vdots \\
   3 & 4 & 5 & 6 & \cdots & 1 & 2 \\
     2 & 3 & 4 & 5 & \cdots & n & 1 \\
   \end{array}\right].\]
   Let $M_{i,j}=\{l_{i,j},l_{i+1,j+1},l_{i+1,j},l_{i+2,j+1}\}$ for any $i,j\in[n]$ (the indices being taken modulo $n$). We have $l_{i,j}=l_{i+1,j+1}$ and $l_{i+1,j}=l_{i+2,j+1}$. If we replace together two cells $l_{i+1,j+1}$ and $l_{i+1,j}$ in $L$, then the result matix is a derangement representation matrix of $K_n-P_2$. We call this change a {\it flip change}. Therefore, to achieve the derangement $n$-representation matrix of $K_n-P_k$, it is enough to do the flip change on $M_{1,1}$, $M_{2,3}$, $\ldots$, and $M_{k-2,2k-5}$ where the indices being taken modulo $n$. In fact, the flip change on $M_{i,2i-1}$, remove the edge $v_iv_{i+1}$ from $E(K_n)$.
}\end{proof}
\begin{example}
As explained in the proof of the Theorem \ref{Pkc}, the following matrx is a derangement representation matrix of $K_{8}-P_6$:
\[L(K_{8}-P_6)=\left[\begin{array}{cccccccc}
{\bf1}&2&3&4&5&6&7&8  \\
{\bf1}&{\bf8}&{\bf2}&3&4&5&6&7  \\
7&{\bf8}&{\bf2}&{\bf1}&{\bf3}&4&5&6  \\
6&7&8&{\bf1}&{\bf3}&{\bf2}&{\bf4}&5  \\
5&6&7&8&1&{\bf2}&{\bf4}&{\bf3}  \\
4&5&6&7&8&1&2&{\bf3}  \\
3&4&5&6&7&8&1&2  \\
2&3&4&5&6&7&8&1  \\ 
  \end{array}\right].\]
\end{example}
\begin{theorem}
Let $n, k\in\mathbb{N}$. If $n\geq k\geq4$, then $n-\bigg\lfloor\dfrac{k}{2}\bigg\rfloor\leq drn(K_n-C_k)\leq n$.
\end{theorem}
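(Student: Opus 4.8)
The plan is to prove the two inequalities separately. Write $G=K_n-C_k$, let the removed cycle lie on vertices $v_1,\dots,v_k$ with edges $v_iv_{i+1}$ ($i\in[k-1]$) and $v_kv_1$, and let $v_{k+1},\dots,v_n$ be the remaining vertices, so that $G^c$ is this $C_k$ together with $n-k$ isolated vertices. For the lower bound I would display a large induced clique when $k$ is even and a large induced copy of $K_p-K_2$ when $k$ is odd. If $k=2t$, take $W=\{v_1,v_3,\dots,v_{k-1}\}\cup\{v_{k+1},\dots,v_n\}$: no cycle edge joins two odd-indexed vertices and $v_kv_1$ has an even-indexed endpoint, so $W$ is independent in $G^c$ and $G[W]\cong K_{n-t}$ with $n-t=n-\lfloor k/2\rfloor$; Lemma~\ref{inducedsubgraph} and Theorem~\ref{thm3}(1) give $drn(G)\ge n-\lfloor k/2\rfloor$. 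If $k=2t+1$, take $W=\{v_1,v_3,\dots,v_k\}\cup\{v_{k+1},\dots,v_n\}$: now $v_k$ has odd index and $v_kv_1$ is a cycle edge, so $G^c[W]$ consists of the single edge $v_kv_1$ plus isolated vertices and $G[W]\cong K_{n-t}-K_2$. Hence $drn(G)\ge drn(K_{n-t}-K_2)\ge n-t=n-\lfloor k/2\rfloor$, using Theorem~\ref{thm3}(2) when $n-t\ge4$ and $drn(P_3)=4$ (Example~\ref{example1}), $drn(\overline{K_2})=3$ (Theorem~\ref{thm11}) in the remaining cases $n-t\in\{2,3\}$; note $n-t\ge\lceil k/2\rceil\ge2$.

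For the upper bound I would recycle and extend the circulant construction from the proof of Theorem~\ref{Pkc}. Starting from the $n\times n$ circulant Latin square $L=[l_{i,j}]$ with $l_{i,j}\equiv j-i+1\pmod n$, the flip changes $M_{1,1},M_{2,3},\dots,M_{k-2,2k-5}$ already turn $L$ into a derangement $n$-representation matrix whose only non-adjacent pairs of rows are $v_1v_2,\dots,v_{k-1}v_k$ (this is exactly the matrix produced for $K_n-P_k$ in Theorem~\ref{Pkc}), the rows of $v_1$ and $v_k$ remaining the untouched circulant rows. It then remains only to force $v_1$ and $v_k$ to agree in exactly one column, without creating any further agreement and without repeating a row. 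When $n=k$ this costs nothing: one additionally performs $M_{k-1,2k-3}$ with all indices read modulo $n=k$, and the verification of Theorem~\ref{Pkc}, carried out mod $k$, shows that $M_{1,1},\dots,M_{k-1,2k-3}$ create precisely the edges of $C_k$, because the second agreement contributed by $M_{k-1,2k-3}$ is $\{v_k,v_{k+1}\}=\{v_k,v_1\}$. When $n>k$ I would instead perform one extra local modification of the $v_k$-row, replacing its entries in two columns $c$ and $c+k-1$ (mod $n$) by a short cyclic permutation that makes $\pi(v_k)(c)=c=\pi(v_1)(c)$, and choosing $c$ so that the only additional agreements it creates are with $v_1$ and with $v_k$'s other cycle-neighbour $v_{k-1}$.

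The delicate point, and the step I expect to fight with, is controlling the side-effects of that last modification: because $L$ is circulant, altering two columns of a single row always produces a \emph{symmetric} pair of new agreements, the second of which lands on the row whose index is the reflection of $k$ about $1$ modulo $n$; when $n<2k-2$ this "mirror" row need not be a cycle-neighbour of $v_k$, and the free column $c$ cannot always be pushed out of the block $\{1,\dots,2k-4\}$ of already-modified columns. I therefore expect the argument to branch according to the position of $n$ relative to $2(k-1)$, together with a short list of small configurations and the cases $k\in\{4,5\}$ disposed of by explicit $n\times n$ matrices in the style of Theorems~\ref{p3}--\ref{p4}. A more uniform route I would keep in reserve is induction on $n$, using $G-v_1\cong K_{n-1}-P_{k-1}$: Theorem~\ref{Pkc} (or Theorems~\ref{p3},~\ref{p4} for $k\in\{4,5\}$) supplies a derangement representation of $G-v_1$, and one then has to re-introduce $v_1$ using one new symbol and one new row — a vertex adjacent to all of $v_3,\dots,v_{k-1},v_{k+1},\dots,v_n$ and non-adjacent to $v_2$ and $v_k$ — whose combinatorial core is selecting a single extra permutation that disagrees everywhere with most rows while agreeing somewhere with two prescribed ones, a constraint with ample slack.
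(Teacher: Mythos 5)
Your lower bound is correct and is essentially the paper's argument: for $k=2t$ the odd-indexed cycle vertices together with $v_{k+1},\dots,v_n$ induce a clique of order $n-t$, and for $k=2t+1$ they induce a copy of $K_{n-t}-K_2$ (the only surviving non-edge being $v_kv_1$), after which Lemma \ref{inducedsubgraph} and Theorem \ref{thm3} finish the job. Your extra care with the small values $n-t\in\{2,3\}$ is harmless and slightly more thorough than the paper.

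The upper bound, however, has a genuine gap, and it is exactly where you say you ``expect to fight'': closing the cycle when $n>k$. Your main route --- start from the flip-change matrix for $K_n-P_k$ and perform one more local modification of the $v_k$-row to create a single agreement with the $v_1$-row --- is left unresolved; you correctly observe that in the circulant square any two-column alteration of one row produces a second, uncontrolled agreement with another row, and that for $n<2k-2$ the free columns collide with the block already consumed by the $P_k$ flips, but you do not show how to resolve this, only that you would ``branch according to the position of $n$ relative to $2(k-1)$'' plus unspecified small cases. The reserve inductive route is likewise not a proof: the existence of a permutation in $S_{n-1}$ (or $S_n$, after adjoining a symbol) that disagrees everywhere with $n-3$ prescribed rows while agreeing somewhere with exactly two others is precisely the content of the claim, and ``a constraint with ample slack'' does not establish it. The paper avoids all of this with a different construction for $n>k$: take Latin squares $A$ of order $t$ and $B$ of order $n-t$ on disjoint symbol sets, assign to the $k$ cycle vertices the concatenated rows $A_1B_t,\,A_1B_1,\,A_2B_1,\,A_2B_2,\,\dots,\,A_tB_{t-1},\,A_tB_t$ (so that cyclically consecutive rows share exactly one of the two blocks, and only those pairs ever agree in a column), extend the Latin rectangle $[A\,|\,B]$ by Hall's theorem to supply the remaining $n-k$ rows, and adjust symbols slightly in the odd case $k=2t-1$. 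You would either need to adopt a construction of this type or actually carry out the case analysis you deferred; as written, only $n-\lfloor k/2\rfloor\leq drn(K_n-C_k)$ is proved, not $drn(K_n-C_k)\leq n$.
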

\begin{proof}{
Let $G=K_n-C_k$, $V(G)=\{v_i|\ i\in[n]\}$, $E(G^c)=\{v_iv_{i+1}|\ i\in[k-1]\}\cup\{v_kv_1\}$,  $V_0=\{v_{2i}|\ 1\leq i \leq\lfloor\frac{k}{2}\rfloor\}$ and $V_1=\{v_{2i-1}|\ 1\leq i\leq \lceil\frac{k}{2}\rceil\}$.\\
{\bf Case 1.} $k=2t$: The subgraph of $G$ induced by $V(G)\setminus V_0$ is a clique of order $n-t$. So $drn(G)\geq \omega(G)=n-t=n-\bigg\lfloor\dfrac{k}{2}\bigg\rfloor$.\\
{\bf Case 2.} $k=2t+1$: The subgraph of $G$ induced by $V(G)\setminus V_0$ is isomorphic to $K_p-K_2$ where $p=n-t$. So $drn(G)\geq drn(K_p-K_2)\geq p=n-\bigg\lfloor\dfrac{k}{2}\bigg\rfloor$.\\
To prove the upper bound, we consider three cases:\\
{\bf Case 1.} $n=k$: Similar to the proof of Theorem \ref{Pkc}, to achieve the derangement $n$-representation matrix of $K_n-P_n$, it is enough to do the flip change on $M_{1,1}$, $M_{3,3}$, $\ldots$, and $M_{n-1,n-1}$. In fact, the flip change on $M_{i,i}$, remove two edge $v_iv_{i+1}, v_{i+1}v_{i+2}$ from $E(K_n)$.\\
{\bf Case 2.} $n>k=2t\geq4$: Let $A$ and $B$ be latin squares of order $t$ and $n-t$, with symbol sets $[t]$ and $[n]\setminus [t]$, respectively. Also the $i$-th row of a matrix $X$ is denoted by $X_i$. Obviously,
\[L=\left[\begin{array}{c}
A_1\ B_1 \\
\vdots \ \ \ \vdots \\
A_{t}\ B_{t} \\
\end{array}\right]\]
is a latin rectangle of order $t\times n$ and so by use of Hall's Theorem, we can extend $L$ to the following latin rectangle:
\[L'=\left[\begin{array}{c}
A_1\ B_1 \\
\vdots\ \ \vdots \\
A_{t}\ B_{t} \\
 C_{1} \\
 \vdots \\
 C_{n-2t} \\
\end{array}\right]\]
Now easily one can show that the following matrix is a derangement $n$-representation matrix of $K_n-C_k$:
\[L''=\left[\begin{array}{c}
A_1\ B_t \\
A_1\ B_1 \\
A_2\ B_1 \\
A_2\ B_2 \\
A_3\ B_2 \\
\vdots\ \ \ \vdots \\
A_{t}\ B_{t-1} \\
A_{t}\ B_t \\
 C_{1} \\
 \vdots \\
 C_{n-2t} \\
  \end{array}\right].\]
{\bf Case 3.} $n>k=2t-1\geq5$: At first, similar to the previous case we define $A$, $B$ of order $t\times t$ and $t\times (n-t)$, such that $A_1=[1\ 2\ 3\ \cdots\ t]$, $A_t=[2\ 3\ \cdots\ t\ 1]$, and $B_1=[t+1\ \cdots \ n]$. Similarly, we define $L$ (of order $t\times n$) and then we change the symbols $1\rightarrow2\rightarrow t+1 \rightarrow1$ in the first row of $L=[l_{i,j}]$ such that $l_{1,1}=l_{t,1}=2$. Then by use of Hall's Theorem, we can extend $L$ to the row latin rectangle $L'$ (of order $(n-t+1)\times n$) by adding $n-2t+1$ rows $C_1,\ldots,C_{n-2t+1}$ such that $\{c_{i,j}| i\in[n-2t+1]\}\cap\{l_{i,j}| i\in[t]\}=\emptyset$ and $c_{i,j}\neq c_{i',j}\ (i\neq i')$ for any $j\in[n]$. Finally, to achieve a derangement $n$-representation matrix of $K_n-C_k$ we define $L''$ as follows:
\[L''=\left[\begin{array}{c}
A_1\ B_1 \\
A_1\ B_2 \\
A_2\ B_2 \\
A_2\ B_3 \\
A_3\ B_3 \\
\vdots\ \ \ \vdots \\
A_{t-1}\ B_{t-1} \\
A_{t-1}\ B_t \\
A_{t}\ B_t \\
 C_{1} \\
 \vdots \\
 C_{n-2t+1} \\
  \end{array}\right].\]
}\end{proof}
To prove the next theorem, we need an special type of latin squares.
\begin{definition}
A latin square is idempotent if every symbol appears on the main diagonal.
\end{definition}
It was proved in \cite{DesignTheory} that there exists an idempotent latin square of order $n$ for any positive integer $n\neq2$, 
\begin{theorem}\label{thmcycle}
$drn(C_n)\leq\lceil\dfrac{n}{2}\rceil+1$ for any positive integer $n\geq3$.
\end{theorem}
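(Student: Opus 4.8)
The plan is to realise $C_n$ directly as a derangement $k$-representation matrix with $k=\lceil n/2\rceil+1$ columns in the sense of Definition \ref{matrixrep}: an $n\times k$ array whose rows are permutations of $[k]$, two rows disagreeing in every column exactly when the corresponding vertices are consecutive on the cycle and agreeing in at least one column otherwise. After recording explicit matrices for the few small orders where the general recipe degenerates, I would fix $m:=\lceil n/2\rceil$ and split the argument according to the parity of $n$, i.e.\ $n=2m$ or $n=2m-1$, in the spirit of the case analysis used for $K_n-C_k$ above.

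For the main construction I would start from an idempotent latin square $M=[M_{a,b}]$ of order $m$ — available by the cited theorem of \cite{DesignTheory} since $m\neq 2$ in this range — normalised so that $M_{a,a}=a$, and use an extra symbol $m+1$ to fill out the $k=m+1$ columns. Walking around the cycle, I would assign to the ``odd'' vertex $v_{2j-1}$ the $j$-th row of $M$ with $m+1$ appended (a bona fide permutation of $[m+1]$), and to the ``even'' vertex $v_{2j}$ a closely related row obtained by a cyclic/staircase shift that pulls $m+1$ into the interior; the shift is chosen precisely so that $v_{2j-1}v_{2j}$ and $v_{2j}v_{2j+1}$ each become ``disagree in every column'' pairs, mimicking the zig-zag that produced the matrix $L(K_n-C_{2t})$. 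Every non-consecutive pair of rows then shares an entry: two rows coming from the same row of $M$ share the position carrying $m+1$, while two rows built from distinct rows $M_a,M_b$ share a value because idempotency ($M_{a,a}=a$) forces the symbol $a$ (or $b$) to be repeated in the appropriate column — this is the point at which idempotency, rather than an arbitrary latin square, is essential. For $n$ odd, or to close the seam between $v_n$ and $v_1$, one row of $M$ is replaced by a hand-tailored modification, exactly as one row of the latin square is replaced in the proofs of Theorems \ref{Pkc} and \ref{k3}.

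Finally I would check the three requirements: each row is a permutation of $[m+1]$ (immediate from the construction); the $n$ rows are pairwise distinct so that $\pi$ is injective; and the adjacency condition holds, namely consecutive rows differ in all $m+1$ columns while every other pair agrees in at least one column. I expect the ``agrees somewhere'' direction to be the easy half — the shared symbol $m+1$ together with the idempotent diagonal make it automatic — and the genuine obstacle to be verifying ``disagrees everywhere'' simultaneously for the two cycle-edges incident to each even vertex, together with getting the wrap-around row and the parity-odd variant to cooperate without accidentally destroying a non-edge; this bookkeeping is also what makes the handful of small orders worth treating separately by explicit matrices.
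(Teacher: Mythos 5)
Your skeleton coincides with the paper's: bipartition $V(C_n)$ into odd and even vertices, give the odd vertices the rows of an idempotent latin square $M$ of order about $n/2$ padded with one extra symbol, and patch the wrap-around and the odd-parity seam with hand-made rows. But the heart of the proof is exactly the part you defer --- what the \emph{even} vertices receive --- and the one mechanism you do commit to fails. First, your bookkeeping for the ``agrees somewhere'' half is internally inconsistent: you say ``two rows coming from the same row of $M$ share the position carrying $m+1$,'' but the two vertices built from the same row $M_j$ are $v_{2j-1}$ and $v_{2j}$, which are adjacent on the cycle and hence must \emph{disagree} in every column; if they share the cell carrying $m+1$ the representation is already wrong. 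Second, if the even row is literally a cyclic/staircase shift of a row of $M$ (with $m+1$ pulled inside), the idempotency argument collapses: $M_{a,a}=a$ only buys you a shared symbol against a row whose $a$-th column still carries the symbol $a$, and shifting destroys that alignment. Concretely, with the cyclic idempotent square of order $5$ whose rows are $(1,4,2,5,3)$, $(4,2,5,3,1)$, $(2,5,3,1,4)$, $(5,3,1,4,2)$, $(3,1,4,2,5)$, the one-step shift $(6,1,4,2,5,3)$ of the first row (a natural candidate for $v_2$) disagrees in \emph{every} column with $(2,5,3,1,4,6)$, the padded third row (the candidate for $v_5$), so the non-edge $v_2v_5$ of $C_{10}$ becomes an edge. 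So the proposal, read literally, produces spurious adjacencies, and read charitably, it asserts that a shift ``chosen precisely so that'' all the constraints hold exists --- which is the statement to be proved.

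The missing idea, which is how the paper's proof actually closes this, is that the even vertices should \emph{not} get rows derived from $M$ at all: the $j$-th even vertex gets (essentially) the identity permutation of $[m+1]$ perturbed only near positions $j$ and $j+1$, where the extra symbol is inserted. Then idempotency does real work: the odd row built from $M_a$ and the $j$-th even row share the symbol $a$ in column $a$ whenever $a\notin\{j,j+1\}$, i.e.\ for every non-adjacent odd--even pair, while the local perturbation of the $j$-th even row sits exactly where it kills the two agreements with its two cycle-neighbours; two even rows agree wherever both still equal the identity, and two odd rows agree in the padded column. Without specifying the even rows to this level of precision (plus the two seam rows for odd $n$), the proposal does not yet constitute a proof.
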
 
 \begin{proof}
{Let $V(C_n)=\{v_1,v_2,\ldots,v_{n}\}$, $E(C_n)=\{v_1v_2,v_2v_3,\ldots,v_{n}v_1\}$. We consider two cases:\\
{\bf Case 1.} $n=2k$:\\
Suppose that $V_1=\{v_1,v_3,\ldots ,v_{2k-1}\}$, and $V_0=\{v_2,v_4,\ldots,v_{2k}\}$. There is an idempotent latin square $M=[m_{i,j}]$ of order $k$. Without loss of generality, we assume that $m_{i,i} =i$. Also matrices $N=[n_{i,j}]_{k \times k}$, $L_1$ and $L_0$ are defined as follows.
{\small \[N=\left[\begin{array}{cccccc}
   2 & k+1 &3 &\cdots &k-1 &k \\
   \\
   1  & 3&k+1&\cdots &k-1 &k \\ \\
    1  &2 &4&\cdots &k-1 &k \\ \\
   \vdots &\vdots &\vdots &\ddots &\vdots &\vdots  \\ \\
    1 &2&3&\cdots &k &k+1 \\
   \\
    k+1&2 & 3&\cdots   &k-1 & 1 \\
   \end{array}\right],
L_1=\left[\begin{array}{c|c}
  \begin{array}{c}
   k+1 \\
   \vdots \\
    k+1  \\
    \end{array}
    &M\\
   \end{array}\right],\ 
L_0=\left[\begin{array}{c|c}
  \begin{array}{c}
   1 \\
   2\\
   \vdots \\
    k \\
    \end{array}
    &N\\
   \end{array}\right].\]}
Now if we assign $L_1$ and $L_0$ to the vertices of $V_1$ and $V_0$, respectively then $L=\left[\begin{array}{c} L_1\\ L_0 \\ \end{array}\right]$ is a derangement $(k+1)$-representation of $C_n$.\\
{\bf Case 2.} $n=2k+1$:\\
Suppose that $V_1=\{v_3,v_5,\ldots,v_{2k-1}\}$, $V_0=\{v_2,v_4,\ldots,v_{2k}\}$. We know there is an idempotent latin square $M=[m_{ij}]$ of order $k$. Without loss of generality, we assume that $m_{i,i} =i$. Also matrices $N=[n_{i,j}]_{(k-1)\times k}$, $L_1$, $L_0$, $W_1$ (corresponding to vertex $v_{1}$), and $W_0$ (corresponding to vertex $v_{2k+1}$) are defined as follows.
\[N=\left[\begin{array}{cccccc}
   2 & k+1 &3 &\cdots &k-1 &k \\
   \\
   1  & 3&k+1&\cdots &k-1 &k \\ \\
    1  &2 &4&\cdots &k-1 &k \\ \\
   \vdots &\vdots &\vdots &\ddots &\vdots &\vdots  \\ \\
    1 &2&3&\cdots &k &k+1 \\
   \end{array}\right],\]
   \[L_0=\left[\begin{array}{c|c}
  \begin{array}{cc}
   k+2& k+1 \\
   \vdots \\
    k+2&k+1  \\
    \end{array}
    &M\\
   \end{array}\right], 
   L_1=\left[\begin{array}{c|c}
  \begin{array}{cc}
   1 & k+2 \\
   2& k+2\\
   \vdots \\
    k-1 & k+2 \\
    \end{array}
    &N\\
   \end{array}\right],\]
\[W_0=[k+1,\, 1  \, | \,  k+2 \, , 2 \, ,3 \, ,\cdots,k],\]
\[W_1=[m_{k1},\, k+2  \, | \,  k+1 \, , m_{12} \, ,m_{23} \, ,\cdots,m_{k-1k}].\]
   Now if we assign $L_1$ and $L_0$ to the vertices of $V_1$ and $V_0$, respectively then $L=\left[\begin{array}{c} L_1\\ W_1 \\ L_0 \\ W_0 \\ \end{array}\right]$ is a derangement $(k+2)$-representation of $C_n$.}
\end{proof} 
\begin{example}
As explained in the proof of the Theorem \ref{thmcycle}, the following matrices are the derangement representations matrices of $C_{10}$ and $C_{11}$:
\[L(C_{10})=\left[\begin{array}{c|ccccc}
6 &1&4&2&5&3  \\
6 &4&2&5&3&1  \\
6 &2&5&3&1&4  \\
6 &5&3&1&4&2  \\
6 &3&1&4&2&5  \\
\hline
1 &2&6&3&4&5  \\
2 &1&3&6&4&5  \\
3 &1&2&4&6&5  \\
4 &1&2&3&5&6  \\
5 &6&2&3&4&1  
  \end{array}\right], 
L(C_{11})=\left[\begin{array}{cc|ccccc}
7&6 &1&4&2&5&3  \\
7&6&4&2&5&3&1  \\
7&6 &2&5&3&1&4  \\
7&6&5&3&1&4&2  \\
7&6 &3&1&4&2&5  \\
\hline
6&1&7&2&3&4&5\\
\hline
1&7 &2&6&3&4&5  \\
2&7 &1&3&6&4&5  \\
3&7 &1&2&4&6&5  \\
4&7 &1&2&3&5&6  \\
3&7 &6&4&5&1&2  \\
  \end{array}\right]\]
\end{example}
\begin{theorem}\label{thmpath}
Let $G$ be a path graph of order greater than $4$, then\,
    $ drn(P_n)  \leq \lceil \dfrac{n}{2} \rceil +1.$
\end{theorem}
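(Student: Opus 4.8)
The plan is to split on the parity of $n$ and, in each case, lean on the cycle bound of Theorem~\ref{thmcycle}. First note that the hypothesis ``order greater than $4$'' is genuinely needed: since $drn(P_3)=drn(P_4)=4$ (Example~\ref{example1} and Theorem~\ref{p4}), the claimed bound $\lceil n/2\rceil+1$ is false for $n\in\{3,4\}$, so one should assume $n\ge 5$ throughout. For odd $n=2k+1\ge 5$, take any $n$ consecutive vertices of the cycle $C_{n+1}$; their two endpoints are non-adjacent in $C_{n+1}$ (as $n+1\ge 6$), so the induced subgraph they span is exactly $P_n$. Hence by Lemma~\ref{inducedsubgraph} and Theorem~\ref{thmcycle},
$$drn(P_n)\le drn(C_{n+1})\le \big\lceil \tfrac{n+1}{2}\big\rceil+1=\big\lceil \tfrac{n}{2}\big\rceil+1,$$
the last equality because $n$ is odd. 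This disposes of the odd case with no extra work.

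For even $n=2k$ with $k\ge 3$ (forced by $n>4$), the induced-subgraph trick only yields $drn(P_{2k})\le drn(C_{2k+1})\le k+2$, which is one too many, so I would build a derangement $(k+1)$-representation matrix directly by recycling the construction used in the proof of Theorem~\ref{thmcycle} for $C_{2k}$. That construction assigns the rows of the block $L_1=[\,k+1\mid M\,]$ ($M$ an idempotent latin square of order $k$) to the odd-indexed vertices $v_1,v_3,\dots,v_{2k-1}$ and the rows of $L_0=[\,1,2,\dots,k\mid N\,]$ to the even-indexed vertices $v_2,v_4,\dots,v_{2k}$, so that all ``agree/derangement'' conditions needed to represent $C_{2k}$ hold. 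The unique edge of $C_{2k}$ that is not an edge of $P_{2k}$ is the wrap-around edge $v_1v_{2k}$, which in matrix terms says row $1$ of $L_1$ and row $k$ of $L_0$ form a derangement pair. The key step is to replace row $k$ of $L_0$ (equivalently, the last row of $N$) by a new permutation $s$ of $[k+1]$ such that: (i) $s$ agrees with row $1$ of $L_1$ in at least one coordinate (killing the edge $v_1v_{2k}$); (ii) $s$ still forms a derangement pair with row $k$ of $L_1$ (keeping the edge $v_{2k-1}v_{2k}$); (iii) $s$ still agrees with every other row of $L_1$ and of $L_0$ in some coordinate; (iv) $s$ differs from all other rows. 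Given such an $s$, the modified matrix is a derangement $(k+1)$-representation matrix of $P_{2k}$, so $drn(P_n)\le k+1=\lceil n/2\rceil+1$.

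The main obstacle is entirely in the even case: exhibiting the replacement row $s$ and checking that this single local change neither creates a spurious derangement pair (which would delete some edge $v_iv_{i+1}$) nor destroys a needed one, while preserving distinctness of all rows. This is careful bookkeeping with the explicit entries of $N$ and $M$ and the cyclic indexing, with the path endpoints ($i=1$ and $i=k$) and the smallest value $k=3$ (where idempotent latin squares only just exist) as the delicate spots. A cleaner alternative, which I would actually carry out and which avoids quoting $N$ at all, is to write the two families of rows from scratch: for the odd-indexed vertices use $r_i=(k+1,\,i,\,i+1,\dots,k,\,1,\dots,i-1)$ for $i\in[k]$, so that they pairwise agree in column $1$, and for the even-indexed vertices use a matching family $s_i$ that places the symbol $k+1$ in one fixed common column (so the $s_i$ pairwise agree) and is tailored so that the only derangement pairs among the $r$'s and $s$'s are exactly the ``staircase'' pairs $\{r_i,s_i\}$ and $\{r_{i+1},s_i\}$, which are precisely the edges of $P_{2k}$. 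Either way, the verification is a finite case analysis; the construction itself is essentially forced by the structure of the cycle proof.
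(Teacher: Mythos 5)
Your overall strategy coincides with the paper's: dispose of one parity by an induced-subgraph reduction and handle the other by modifying the $C_{2k}$ construction of Theorem \ref{thmcycle}. Your odd case is complete and correct, though you route it through $C_{n+1}$ whereas the paper embeds $P_{2k-1}$ in $P_{2k}$ and quotes its own even case; both give $\lceil n/2\rceil+1$ for odd $n$. Your remark that the hypothesis $n>4$ is genuinely needed (since $drn(P_3)=drn(P_4)=4$) is also correct.

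The gap is in the even case, which is the entire content of the theorem. You reduce it to producing a replacement row $s$ for the vertex $v_{2k}$ satisfying your conditions (i)--(iv), but you never exhibit $s$ or verify the conditions, and neither of your two proposed constructions is carried out; as written, the argument is a program rather than a proof, and the ``delicate spots'' you flag ($i=1$, $i=k$, $k=3$) are precisely where an unexhibited $s$ could fail to exist. The conditions (i)--(iv) are the right ones, and the missing witness is exactly the paper's choice: keep the $C_{2k}$ matrix but replace the last row of $N$, which was $[\,k+1,\ 2,\ 3,\ \dots,\ k-1,\ 1\,]$, by $[\,1,\ 2,\ 3,\ \dots,\ k-1,\ k+1\,]$, so that the full row assigned to $v_{2k}$ becomes $[\,k,\ 1,\ 2,\ \dots,\ k-1,\ k+1\,]$. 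One then checks directly that this row agrees with the row of $v_1$ in column $2$ (both entries equal $m_{1,1}=1$), agrees with the row of $v_{2i-1}$ for $1<i<k$ in column $i+1$ (both entries equal $m_{i,i}=i$), agrees with every other row of $L_0$ (those rows carry the symbols $1,\dots,k-1$ in almost the same positions), and remains a derangement against the row of $v_{2k-1}$ because $m_{k,j}\neq j$ for $j<k$ while the first and last entries are $k\neq k+1$ and $k+1\neq m_{k,k}=k$. Supplying this explicit row and the short verification is what turns your outline into the paper's proof.
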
 
\begin{proof}{
Let $V(G)=\{v_1,v_2,\ldots,v_{2k}\}$,$E(G)=\{v_1v_2,v_2v_3,\ldots,v_{n-1}v_{n}\}$. We consider two cases:\\
{\bf Case 1.} $n=2k$: Suppose that $V_1=\{v_1,v_3,\ldots ,v_{2k-1}\}$, and $V_0=\{v_2,v_4,\ldots,v_{2k}\}$. There is an idempotent latin square $M=[m_{i,j}]$ of order $k$. Without loss of generality, we assume that $m_{i,i} =i$. Also matrices $N=[n_{i,j}]_{k \times k}$, $L_1$ and $L_0$ are defined as follows.
{\small \[N=\left[\begin{array}{cccccc}
   2 & k+1 &3 &\cdots &k-1 &k \\
   \\
   1  & 3&k+1&\cdots &k-1 &k \\ \\
    1  &2 &4&\cdots &k-1 &k \\ \\
   \vdots &\vdots &\vdots &\ddots &\vdots &\vdots  \\ \\
    1 &2&3&\cdots &k &k+1 \\
   \\
    1 & 2 & 3 &\cdots   &k-1 & k+1 \\
   \end{array}\right],
L_1=\left[\begin{array}{c|c}
  \begin{array}{c}
   k+1 \\
   \vdots \\
    k+1  \\
    \end{array}
    &M\\
   \end{array}\right],\ 
L_0=\left[\begin{array}{c|c}
  \begin{array}{c}
   1 \\
   2\\
   \vdots \\
    k \\
    \end{array}
    &N\\
   \end{array}\right].\]}
Now if we assign $L_1$ and $L_0$ to the vertices of $V_1$ and $V_0$, respectively then $L=\left[\begin{array}{c} L_1\\ L_0 \\ \end{array}\right]$ is a derangement $(k+1)$-representation of $P_n$.\\
{\bf Case 2.} $n=2k-1$: $P_{2k-1}$ is an induced subgraph of $P_{2k}$. So $drn(P_{2k-1})\leq drn(P_{2k})\leq \lceil\frac{2k}{2}\rceil+1=k+1=\lceil \frac{n}{2}\rceil+1$.
}\end{proof}
\begin{example}
As explained in the proof of the Theorem \ref{thmpath}, the following matrices are the derangement representations matrices of $C_{10}$ and $C_{11}$:
\[L(P_9)=\left[\begin{array}{c|ccccc}
6 &1&4&2&5&3  \\
6 &4&2&5&3&1  \\
6 &2&5&3&1&4  \\
6 &5&3&1&4&2  \\
6 &3&1&4&2&5  \\
\hline
1 &2&6&3&4&5  \\
2 &1&3&6&4&5  \\
3 &1&2&4&6&5  \\
4 &1&2&3&5&6  
  \end{array}\right],
L(P_{10})=\left[\begin{array}{c|ccccc}
6 &1&4&2&5&3  \\
6 &4&2&5&3&1  \\
6 &2&5&3&1&4  \\
6 &5&3&1&4&2  \\
6 &3&1&4&2&5  \\
\hline
1 &2&6&3&4&5  \\
2 &1&3&6&4&5  \\
3 &1&2&4&6&5  \\
4 &1&2&3&5&6  \\
5 &1&2&3&4&6 
    \end{array}\right].\]
\end{example}
\begin{theorem}
Let $n,r\in\mathbb{N}$ and $1<r<n$. Then $drn(K_n-K_r)\leq\max\{n,2r\}$.
\end{theorem}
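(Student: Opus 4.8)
The plan is to split on whether $n \ge 2r$ (so that $\max\{n,2r\}=n$) or $r<n<2r$ (so that $\max\{n,2r\}=2r$), and in each case to exhibit an explicit derangement representation matrix in the spirit of the proofs of Theorems \ref{thm3} and \ref{thm4}. Write $U=\{u_1,\dots,u_r\}$ for the $r$ pairwise non-adjacent vertices (the deleted copy of $K_r$) and $W=\{v_1,\dots,v_{n-r}\}$ for the remaining vertices, which form a clique and are joined to everything. The guiding idea is to reserve the first $r$ coordinates for a ``small'' symbol block $[r]$ on which all $U$-rows are forced to coincide, while simultaneously forcing every $W$-row to avoid $[r]$ on those coordinates; this is what decouples the two competing demands (the $U$-rows must pairwise agree somewhere, the $W$-rows must disagree with everything in every coordinate).

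For $n\ge 2r$: since $r\le n-r$ there is an $r\times(n-r)$ Latin rectangle $D$ on the symbol set $\{r+1,\dots,n\}$; combine it with any $r\times r$ Latin square $C$ on $[r]$ whose first row is $(1,2,\dots,r)$. The block $[\,C\mid D\,]$ is an $r\times n$ Latin rectangle, so by Hall's theorem it extends to an $n\times n$ Latin square $L$. Assign rows $r+1,\dots,n$ of $L$ to the vertices of $W$, and to $u_i$ assign the $i$-th row of $L$ with its first $r$ entries overwritten by $(1,2,\dots,r)$. One then checks: each assigned tuple is a permutation of $[n]$; the $U$-rows are pairwise distinct (their last $n-r$ coordinates are the distinct rows of $D$) and pairwise agree on the first $r$ coordinates; the $W$-rows pairwise disagree everywhere (distinct rows of a Latin square); and each $U$-row disagrees with each $W$-row in every coordinate, since on the first $r$ coordinates the $W$-rows of $L$ carry only the symbols $\{r+1,\dots,n\}$ whereas $u_i$ carries symbols in $[r]$ there, and on the last $n-r$ coordinates distinct rows of $L$ never agree. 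Hence the only coincidences are within $U$, i.e. the matrix represents $K_n-K_r$ using $n$ columns.

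For $r<n<2r$: use $2r$ columns and symbols $[2r]$. Let $D$ be an $r\times r$ Latin square on $\{r+1,\dots,2r\}$ and set $u_i=(1,2,\dots,r\mid D(i,\cdot))$. For $W$ use ``block anti-diagonal'' permutations: $v_l$ sends the first $r$ coordinates bijectively onto $\{r+1,\dots,2r\}$ and the last $r$ coordinates bijectively onto $[r]$, with the two halves taken from cyclic families of pairwise disjoint bijections between two $r$-element sets, which is possible because $n-r\le r$. Again the $U$-rows agree on the first $r$ coordinates; each $U$-row disagrees with each $W$-row everywhere (values in $[r]$ versus $\{r+1,\dots,2r\}$ on the first block, and the reverse on the second); and the $W$-rows disagree pairwise everywhere by disjointness of the chosen bijections. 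This needs nothing beyond $r\ge 2$, so there are no small-order exceptions, and the boundary case $n=2r$ is covered by either construction.

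The step I expect to be the real obstacle is precisely the construction of the $U$-rows: one wants $r$ genuine permutations that are pairwise distinct, pairwise coincide in at least one coordinate, yet disagree with every clique row in every coordinate, and the naive move --- take $r$ rows of a Latin square and swap a couple of their entries to manufacture a coincidence --- fails, because the symbol swapped into a new position can collide with a clique row there. Building the required coincidence into a reserved symbol block $[r]$ from the outset (via the sub-Latin-square $C$ in the first case, and via the forced anti-diagonal shape of the clique rows in the second) is what circumvents this, and once it is in place the remaining verifications are routine bookkeeping about rows and columns of Latin squares.
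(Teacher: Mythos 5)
Your proposal is correct, and for the main case $n\ge 2r$ it is essentially the paper's own construction: an $r\times r$ Latin square on $[r]$ juxtaposed with a Latin rectangle on $\{r+1,\dots,n\}$, extended to an $n\times n$ Latin square via Hall's theorem, with the first $r$ rows then made to coincide on the reserved block $[r]$. The only divergence is the case $r<n<2r$, where the paper simply invokes Lemma \ref{inducedsubgraph} to reduce to $K_{2r}-K_r$ and reuse the first case, whereas you build the $2r$-column matrix directly with block anti-diagonal clique rows; both verifications go through, so this is a cosmetic rather than substantive difference.
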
 
\begin{proof}
{Suppose that $V(K_n-K_r)=\{v_1,\ldots,v_n\}$.
At first, suppose that $n\geq 2r$. We give a derangement $n$-representation matrix $L$ of $G$. Let $A=[a_{i,j}]_{r\times r}$ and $B=[b_{i,j}]_{(n-r)\times (n-r)}$ be two latin squares with disjoint sets of symbols $[r]$ and $\{r+1,\ldots,n\}$, respectively. Let $B_1$ be the matrix of order $r\times (n-r)$ that contains the first $r$ rows of $B$. Now we define the first $r$ rows of $L_1$ as follows:
$L_1=\big[\begin{array}{c|c} A & B_1\end{array}\big]$. Then we extend this latin rectangle to a complete latin square $L_2$ of order $n$ by use of Hall's Theorem. Finally, we replace all rows of $A$ with the first row of $A$ in $L_2$ to achieve $L=[l_{i,j}]_{n\times n}$. Now $l_{i,j}=l_{i',j}$ for some $j\in [n]$ if and only if $i,i'\in [r]$ and in other cases, we have $l_{i,j}\neq l_{i',j}$ for all $j\in [n]$. Therefore, $L$ is a derangement $n$-representation matrix of $G$.\\
Now suppose that $n<2r$. In this case, $K_n-K_r$ is an induced subgraph of $K_{2r}-K_r$. Therefore  $drn(K_n-K_r)\leq drn(K_{2r}-K_r)\leq 2r$ by Lemma \ref{inducedsubgraph}.
}\end{proof}
\section{Computational results using Sage }\label{sec4}
We can compute the derangement representation number of graphs of small orders. For example, the following simple code in Sage tests the derangement $4$-representability of the fork graph (See the Figure \ref{fork}):
\begin{figure}[h!]
 \centering
  \includegraphics[width=\textwidth]{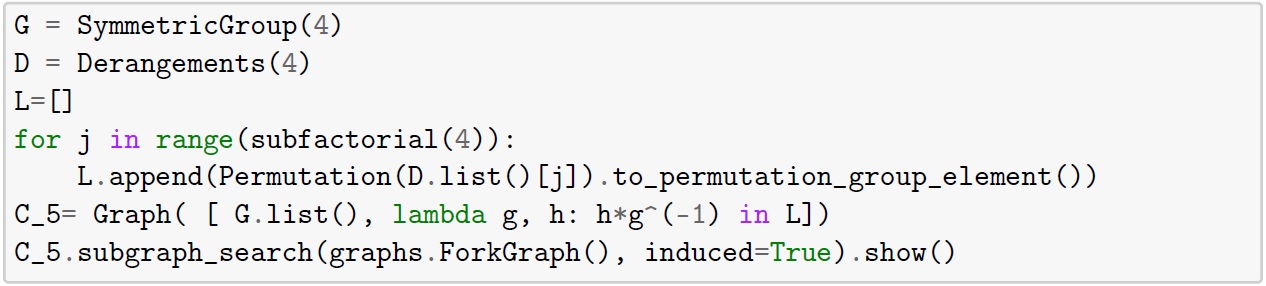}
  \end{figure}
\begin{figure}[h]
\begin{center}
\begin{tikzpicture}[scale=0.5]
 	\tikzset{vertex/.style = {shape=circle,draw, line width=1pt,opacity=1.0}}
  \node[vertex] (x) at (-4,0) {};
  \node[vertex] (y) at (-1,0) {};
  \node[vertex] (z) at (2,0) {};
  \node[vertex] (a) at (5,2) {};
  \node[vertex] (b) at (5,-2) {};
  \foreach \from/\to in {x/y,y/z,z/a,z/b}
  \draw[line width=1pt] (\from) -- (\to);
  \node  at (-4,0.8) {\small{$(1,2,4,3)$}};
  \node  at (-1,-0.8) {\small{$(3,1,2,4)$}};
  \node  at (4.4,0) {\small{$(2,4,3,1)$}};
  \node at (7.2,2) {\small{$(1,4,2,3)$}};
  \node at (7.2,-2) {\small{$(1,2,3,4)$}};
		\end{tikzpicture}
 		\caption{A derangement $4$-representation of fork graph}
 		\label{fork}
 		\end{center}
 		\end{figure}
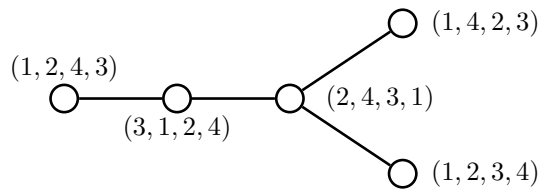
In Table 1, we show the derangement representation number of cycle $C_n$ for any $n\in\{3,\ldots,20\}$:
\[\begin{tabular}{c|c}
    \hline
    $n$ & $drn(C_n)$ \\
    \hline
    3 & 3\\
    4,\ldots, 6 & 4\\
    7,\ldots, 12 & 5\\
    13,\ldots, 20 & 6\\
    \hline
\end{tabular}\]
\begin{center}
Table 1: $drn(C_n)$ for $3\leq n\leq 20$
\end{center}
In Table 2, we show the derangement representation number of path $P_n$ for any $2\leq n\leq 29$:
\[\begin{tabular}{c|c}
    \hline
    $n$ & $drn(P_n)$ \\
    \hline
    2 & 2\\
    3,\ldots, 8 & 4\\
    9,\ldots, 14 & 5\\
    15,\ldots, 22 & 6\\
    23,\ldots, 29 &7\\
    \hline
\end{tabular}\]
\begin{center}
Table 2: $drn(P_n)$ for $2\leq n\leq 29$
\end{center}
In Table 3, we find the derangement representation number of complete bipartite graph $K_{r,s}$ when $r,s\in [10]$:
\[\begin{tabular}{cc|c}
    \hline
    $r$ & $s$ & $drn(K_{r,s})$ \\
    \hline
    1 &1 & 2\\
    1,2 & 2,3 & 4\\
    3 & 3 & 5\\
    1,\ldots,4 & 4 & 5\\
    1,\ldots,5 & 5 & 5\\
    1,\ldots,6 & 6 & 5\\
    1,\ldots,4 & 7 & 5\\
    5,\ldots,7 & 7 & 6\\
    1,\ldots,4 & 8 & 5\\
    5,\ldots,8 & 8 & 6\\
    1,2 & 9 & 5\\
    3,\ldots,6 & 9 & 6\\
    7,\ldots,9 & 9 & 7\\
    1 & 10 & 5\\
    2,\ldots,6 & 10 & 6\\
    7,\ldots,10 & 10 & 7\\
    \hline
\end{tabular}\]
\begin{center}
Table 3: $drn(K_{r,s})$ for $r,s\in[10]$
\end{center}
Suppose that $Cay^{not}(i)$ represents the number of graphs of order $i$ which are not induced subgraphs of $Cay(S_i, D_i)$. For some small values of $i$, we compute $Cay^{not}(i)$ using SAGE,  with the following code. The results for $i\in[7]$ have been shown in Table 4.
\begin{figure}[h!]
 \centering
  \includegraphics[width=\textwidth]{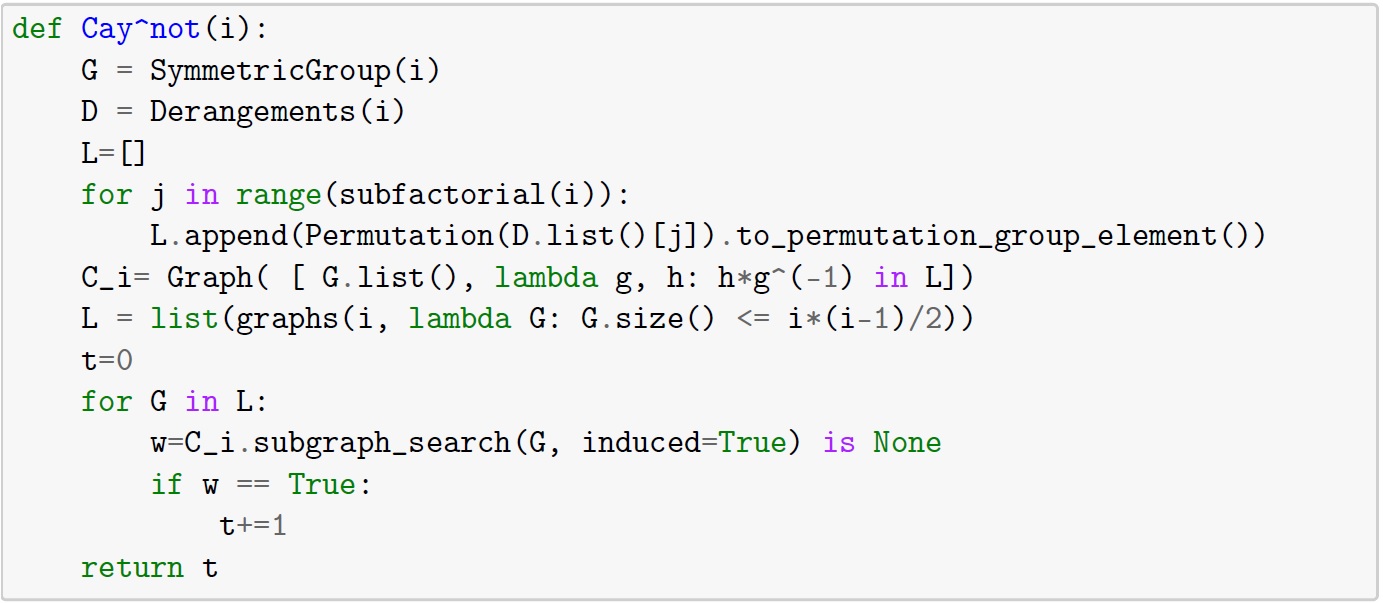}
  \end{figure}     
\[\begin{tabular}{c|c}
    \hline
    $i$ & $Cay^{not}(i)$ \\
    \hline
    1 & 0\\
    2 & 1\\
    3 & 2\\
    4,\ldots,7 & 0\\
    \hline
\end{tabular}\]
\begin{center}
Table 4: $Cay^{not}(i)$ for $i\in[7]$
\end{center}
Using the code "$G$.subgraph$\_$search($H$, induced=True).show()" in Sage, we can identify an induced subgraph of $G$ that is isomorphic to $H$. For instance, in the following code we identify the graphs of order $6$ with derangement representation number $6$. In fact, $K_6$, $K_6-K_2$, $K_6-2K_2$ and $K_6-K_3$ are the only graphs of order six with derangement representation number six:  (See Figure \ref{n6}).
\begin{figure}[h!]
 \centering
  \includegraphics[width=\textwidth]{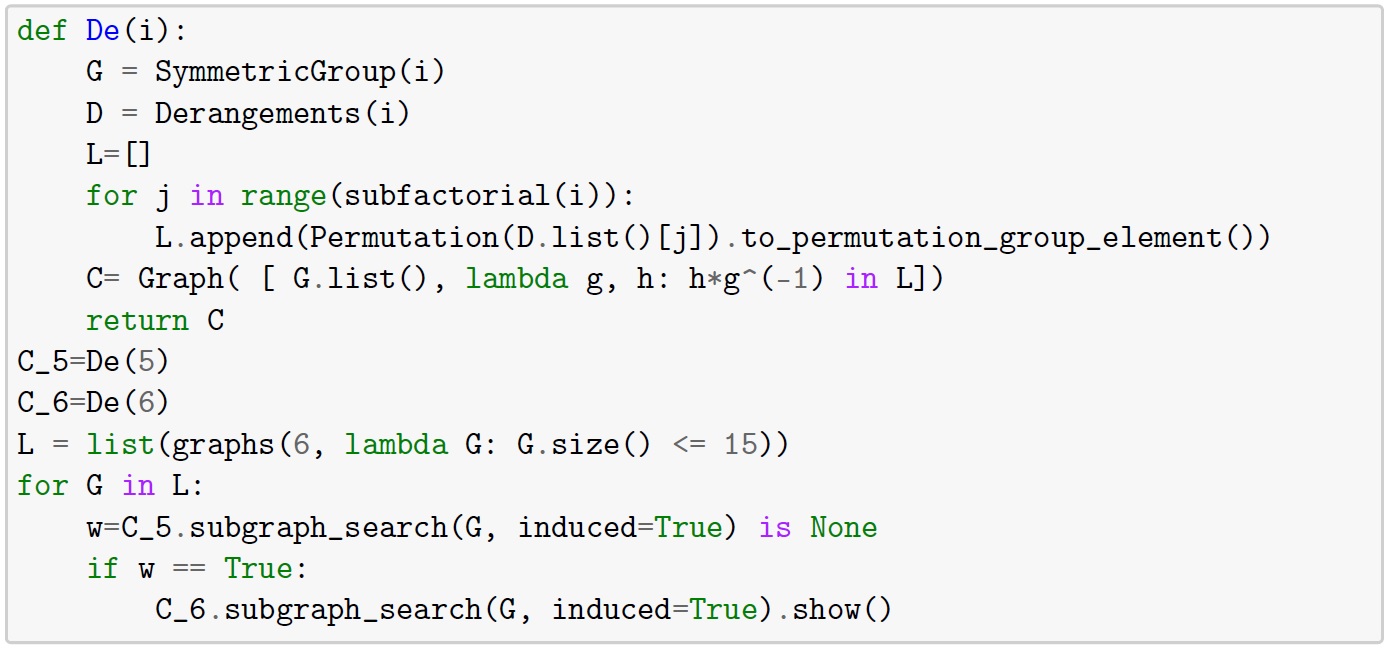}
  \end{figure}
 \begin{figure}[h!]
  \begin{center}
 	\begin{tikzpicture}[scale=0.4]
 	\tikzset{vertex/.style = {shape=circle,draw, line width=1pt,opacity=1.0}}
  \node[vertex] (x) at (-3,-2) {};
  \node[vertex] (y) at (0,-0.3) {};
  \node[vertex] (z) at (3,-2){};
  \node[vertex] (a) at (3,-6) {};
  \node[vertex] (b) at (0,-7.7) {};
  \node[vertex] (c) at (-3,-6) {};
  \foreach \from/\to in {x/y,y/z,y/b,z/a,z/x,z/c,z/b,a/b,a/x,b/c,b/y,b/x,c/x}
  \draw[line width=1pt] (\from) -- (\to);
  \node  at (-6.2,-2) {\small{$(2,1,4,3,6,5)$}};
  \node  at (0,0.6) {\small{$(1,2,6,4,5,3)$}};
  \node  at (6.3,-2){\small{$(5,6,1,2,3,4)$}};
  \node  at (6.3,-6) {\small{$(1,2,3,5,4,6)$}};
  \node at (0,-8.6) {\small{$(3,4,5,6,1,2)$}};
  \node  at (-6.2,-6) {\small{$(1,2,3,4,5,6)$}};  
		\end{tikzpicture}
		\begin{tikzpicture}[scale=0.4]
 	\tikzset{vertex/.style = {shape=circle,draw, line width=1pt,opacity=1.0}}
  \node[vertex] (x) at (-3,-2) {};
  \node[vertex] (y) at (0,-0.3) {};
  \node[vertex] (z) at (3,-2){};
  \node[vertex] (a) at (3,-6) {};
  \node[vertex] (b) at (0,-7.7) {};
  \node[vertex] (c) at (-3,-6) {};
  \foreach \from/\to in {x/y,y/z,y/b,y/a,y/c,z/a,z/x,z/c,a/b,a/c,b/c,b/x,c/x}
  \draw[line width=1pt] (\from) -- (\to);
  \node  at (-6.2,-2) {\small{$(2,6,4,1,3,5)$}};
  \node  at (0,0.6) {\small{$(4,3,6,5,2,1)$}};
  \node  at (6.3,-2) {\small{$(3,4,5,6,1,2)$}};
  \node  at (6.3,-6)  {\small{$(2,1,4,3,6,5)$}};
  \node at (0,-8.6) {\small{$(1,2,3,4,5,6)$}};
  \node  at (-6.2,-6) {\small{$(3,5,1,6,4,2)$}};  
		\end{tikzpicture}
		\begin{tikzpicture}[scale=0.4]
 	\tikzset{vertex/.style = {shape=circle,draw, line width=1pt,opacity=1.0}}
 \node[vertex] (x) at (-3,-2) {};
  \node[vertex] (y) at (0,-0.3) {};
  \node[vertex] (z) at (3,-2){};
  \node[vertex] (a) at (3,-6) {};
  \node[vertex] (b) at (0,-7.7) {};
  \node[vertex] (c) at (-3,-6) {};
  \foreach \from/\to in {x/y,y/z,y/b,y/a,y/c,z/a,z/b,z/x,a/x,a/b,a/c,b/c,b/x,c/x}
  \draw[line width=1pt] (\from) -- (\to);
  \node  at (-6.4,-2) {{$(5,6,1,2,3,4)$}};
  \node  at (0,0.6) {{$(3,4,5,6,1,2)$}};
  \node  at (6.5,-2) {{$(2,5,4,1,6,3)$}};
  \node  at (6.5,-6) {{$(1,2,3,4,5,6)$}};
  \node at (0,-8.6) {{$(4,3,6,5,2,1)$}};
  \node  at (-6.4,-6) {{$(2,1,4,3,6,5)$}};  
		\end{tikzpicture}
		 		\caption{Derangement $6$-representations of $K_6-K_3$, $K_6-2K_2$ and $K_6-K_2$}
		 		\label{n6}
 		\end{center}
 		 \end{figure}
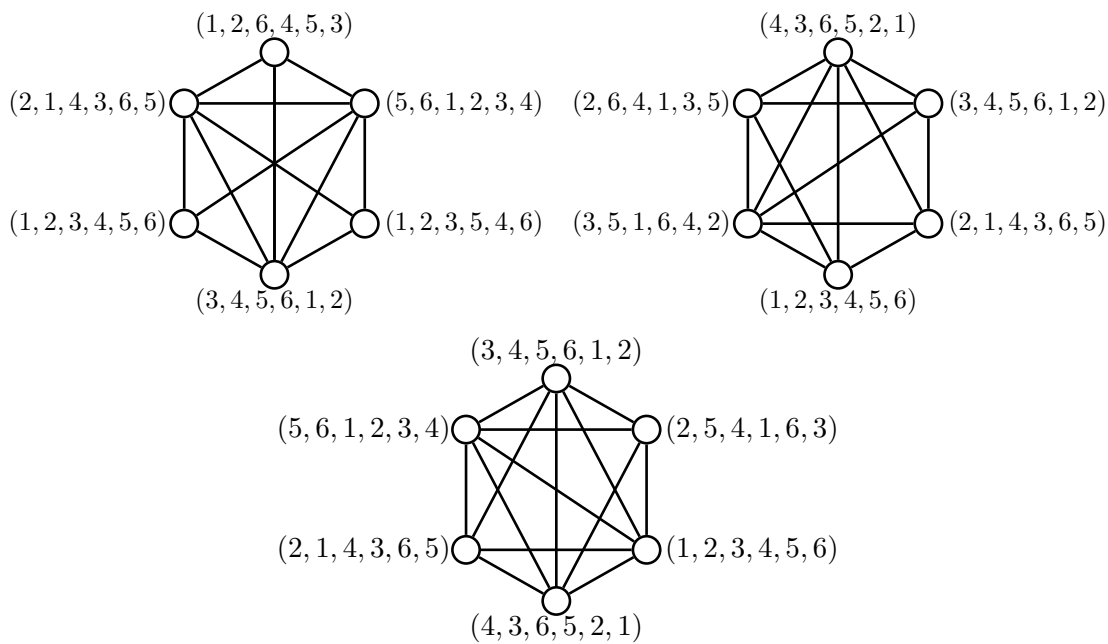
\section{Further questions}\label{sec5}
As outlined in Sections \ref{sec3} and \ref{sec4}, especially in Table 4, we see that the derangement representation number of any graph which is reviewed in this paper is less than its order, except a finite number of graphs ($\mathcal{F}=\{\overline{K_2}, \overline{K_3}, P_3\}$). So we propose the following conjecture:
\begin{conjecture}\label{conj1}
$drn(G)\leq p(G)$ for any finite graph $G$ except the graphs of the family $\mathcal{F}$.
\end{conjecture}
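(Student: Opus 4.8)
The plan is to reduce the conjecture, via Theorem \ref{thm2}, to the combinatorial statement that every graph $G\notin\mathcal{F}$ on $n=p(G)$ vertices admits a \emph{derangement $n$-representation matrix}: an $n\times n$ array whose rows are distinct permutations of $[n]$ and whose ``agreement graph'' --- join rows $i$ and $i'$ whenever they coincide in some column --- equals $G^c$; equivalently, $G$ embeds as an induced subgraph of $Cay(S_n,D_n)$. First I would clear the extreme and small cases. If $G=K_n$ this is Theorem \ref{thm3}(1); if $G=\overline{K_n}$ it is Theorem \ref{thm11}, where a direct check shows $k+1\le n$ whenever $(k-1)!<n\le k!$ and $n\ge4$ --- which is precisely why $\overline{K_2}$ and $\overline{K_3}$ must be excluded --- while all orders $n\le7$ are settled by the SageMath census in Table 4, whose exceptional graphs occur only at orders $2$ and $3$ and are exactly $\overline{K_2},\overline{K_3},P_3$. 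So one may assume $n\ge8$ and $1\le q(G^c)\le\binom{n}{2}-1$.

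Second, I would set up an induction on $n$ (with a secondary induction on $q(G^c)$) and split on the structure of $G^c$. If $G^c$ has a clique decomposition $\{C_1,\dots,C_s\}$ with $s\ge2$ and $(s-1)n\le\sum_{i=1}^s(p(C_i)-1)$, then Theorem \ref{thm4} already gives $drn(G)\le n$, which handles every $G^c$ dense enough to be covered by two large cliques. The substantive regime is $G^c$ connected but sparse --- typified by $G^c$ a tree or a bounded-degree connected graph --- where no clique decomposition meets the bound. Here the idea, generalising the explicit constructions of Section \ref{sec3} for $K_n-P_k$, $K_n-C_k$ and $K_n-K_r$, is to pick a vertex $v$ of minimum degree in $G^c$, take by induction a derangement $(n-1)$-representation matrix $L'$ of $G-v$ on symbols $[n-1]$, adjoin one column carrying the fresh symbol $n$ and one row for $v$, and then perform ``flip''-type surgery on $L'$ --- exactly the move in the proof of Theorem \ref{Pkc} --- to create, for each of the $\deg_{G^c}(v)$ neighbours of $v$ in $G^c$, a single column in which $v$'s row agrees with that neighbour's row, while Hall's theorem repairs the Latin condition in the column of symbol $n$ and places the remaining copies of $n$ without destroying any required discordance. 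Distinctness of rows is automatic as soon as $G$ has an edge at $v$.

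The hard part --- and the reason this is only a conjecture --- is making the surgery uniform over all $G^c$ at once: the row-duplication move for cliques, the single flip for $P_2$, and the chained flips for $P_k$ and $C_k$ must be shown simultaneously performable on an arbitrary $G^c$ with no two \emph{forced} agreements ever landing in the same cell. The tight contradictions in the proofs of Theorem \ref{k3} and of the $K_n-2K_2$ result show how little room there is for small $n$, so one must prove that $n\ge8$ always supplies enough slack --- the extra symbol $n$ beyond a Latin square of order $n-1$, together with the abundance of non-derangements in $S_n$. One must also steer the induction so that $G-v$ is never one of the three exceptions and never satisfies $drn(G-v)=n$ rather than $drn(G-v)\le n-1$, which may force a cleverer choice than a single minimum-degree vertex --- for instance deleting two well-separated vertices of $G^c$ at once, as in the $K_n-K_3$ argument. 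A cleaner alternative would be a direct probabilistic or algebraic argument inside $S_n$ realising the prescribed agreement pattern of an arbitrary $G\notin\mathcal{F}$, but no such argument is presently apparent.
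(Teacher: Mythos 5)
The statement you are asked about is not proved in the paper at all: it is stated as Conjecture \ref{conj1} in the final section and left open, supported only by the computational evidence of Section \ref{sec4}. Your proposal is therefore not being measured against an existing argument, and on its own terms it is not a proof either --- you say so yourself when you write that the surgery step is ``the reason this is only a conjecture'' and that no uniform argument ``is presently apparent.'' What you have written is a reasonable programme, and the reductions you do carry out (complete and empty graphs via Theorems \ref{thm3} and \ref{thm11}, small orders via Table 4, dense $G^c$ via Theorem \ref{thm4}) are sound. But the core of the statement is exactly the part you leave unestablished.

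Two concrete gaps in the programme are worth naming. First, your case split is not exhaustive: Theorem \ref{thm4} gives $drn(G)\le n$ only when $(s-1)n\le\sum_i\bigl(p(C_i)-1\bigr)$, which forces the cliques of the decomposition to be very large, while your inductive surgery is pitched at $G^c$ sparse and tree-like; a complement $G^c$ of moderate density with no large cliques (say, the complement of a random graph) falls into neither regime, and the minimum degree of $G^c$ there can be of order $n$, so the vertex $v$ you delete may need $\Theta(n)$ distinct agreement columns whose forced entries must simultaneously form a partial permutation of $v$'s row --- you give no mechanism for resolving these conflicts. Second, the induction hypothesis only supplies \emph{some} derangement $(n-1)$-representation of $G-v$, not the circulant Latin square on which the flip move of Theorem \ref{Pkc} is defined; the flip surgery does not transfer to an arbitrary representation matrix, because creating an agreement between $v$'s row and a neighbour's row in one column can silently create a forbidden agreement between $v$ and a third row elsewhere. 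Until these are addressed the statement remains, as the authors intend, a conjecture.
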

We can easily show that the complete graph $K_n$ is derangement $k$-representable for all $k\geq n$. In fact, any latin rectangle of order $n\times k$ is a derangement representation matrix of $K_n$. There is a similar result for the graph $\overline{K_n}$.
\begin{problem}
Characterise all graphs $G$ which are derangement $k$-representable for any positive integer $k\geq drn(G)$.
\end{problem}

\end{document}